\setlist[enumerate]{label=\upshape(\roman*)}
\theoremstyle{plain}
\newtheorem{theorem}{Theorem}[section]
\newtheorem{lemma}[theorem]{Lemma}
\newtheorem{proposition}[theorem]{Proposition}
\newtheorem{remark}[theorem]{Remark}
\newtheorem{definition}[theorem]{Definition}
\newtheorem{example}[theorem]{Example}
\mathchardef\expandafter\varphi\number\expandafter\phi\expandafter\relax
\mathchardef\expandafter\phi\number\varphi
\newcommand{\B}{\mathcal{B}}
\newcommand{\K}{\mathcal{K}}
\newcommand{\h}{\mathcal{H}}
\newcommand{\eps}{\epsilon}
\DeclareMathOperator{\spans}{span}
\DeclareMathOperator{\tr} {Tr}
\DeclareMathOperator{\M}{\mathcal M(\mathcal A \otimes \mathcal K)}
\DeclareMathOperator{\idx}{ind}
\DeclareMathOperator{\diag}{diag}
\newcommand{\projone}{p}
\newcommand{\projtwo}{q}
\numberwithin{equation}{section}
\title[Kadison's Pythagorean Theorem and essential codimension]{Kadison's Pythagorean Theorem \\ and essential codimension}
\author{Victor Kaftal}
\address{Department of Mathematics\\
University of Cincinnati\\
P. O. Box 210025\\
Cincinnati, OH\\
45221-0025\\
USA}
\email{victor.kaftal@uc.edu}
\author{Jireh Loreaux}
\address{Department of Mathematics and Statistics\\
Southern Illinois University Edwardsville \\
1 Hairpin Dr \\
Edwardsville, IL \\
62026-1653 \\
USA}
\email{jloreau@siue.edu}
\keywords{Essential codimension, Fredholm pairs of projections, diagonals of projections, diagonals of selfadjoint operators}
\subjclass[2010]{Primary:47B15, 47A53; Secondary: 46C05, 42C15.}
\thanks{This work was partially supported by the Simons Foundation grant No 245660 to Victor Kaftal}
\begin{document}

\begin{abstract}

Kadison's Pythagorean theorem (2002) provides a characterization of the diagonals of projections with a subtle integrality condition. Arveson (2007), Kaftal, Ng, Zhang (2009), and Argerami (2015) all provide different proofs of that integrality condition. In this paper we interpret the integrality condition in terms of the essential codimension of a pair of projections introduced by Brown, Douglas and Fillmore (1973), or, equivalently of the index of a Fredholm pair of projections introduced by Avron, Seiler, and Simon (1994). The same techniques explain the integer occurring in the characterization of diagonals of selfadjoint operators with finite spectrum by Bownik and Jasper (2015).

\end{abstract}
\maketitle
\section {Introduction}

In his seminal papers on the Pythagorean Theorem (\cite{Kad-2002-PNASU,Kad-2002-PNASUa}), Kadison characterizes the diagonals of projections, that is the sequences that can appear on the diagonal of a matrix representation of a projection.
The main assertion of his Theorem 15 is by now usually paraphrased as follows:

\begin{theorem}[\protect{\cite[Theorem 15]{Kad-2002-PNASUa}}]
  \label{T:Kadison15}
  A sequence $\{d_n\}$ with $0\le d_n\le 1$ is the diagonal of a projection $B(\mathcal{H})$ if and only if for
  \begin{equation*}
    a = \sum_{d_n \le \nicefrac{1}{2}} d_n
    \qquad\text{and}\qquad
    b = \sum_{d_n > \nicefrac{1}{2}} (1-d_n),
  \end{equation*}
  either
  \begin{enumerate}
  \item $a+b = \infty$, or
  \item $a+b < \infty$ and $a-b \in \mathbb{Z}$.
  \end{enumerate}
\end{theorem}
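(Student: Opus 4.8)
The plan is to prove both implications, treating the integrality in (ii) as the crux and interpreting the resulting integer as the index of a Fredholm pair of projections. \emph{For necessity}, suppose $\{d_n\}$ is the diagonal of a projection $P$ on $\h$ with orthonormal basis $\{e_n\}$, so $d_n=\langle Pe_n,e_n\rangle$. Since $P=P^*=P^2$, reading off row $n$ gives $d_n=\sum_m\abs{p_{mn}}^2$, whence $\sum_{m\ne n}\abs{p_{mn}}^2=d_n-d_n^2=d_n(1-d_n)$. Summing over $n$, the squared Hilbert–Schmidt norm of the off-diagonal part of $P$ equals $\sum_n d_n(1-d_n)$; and since $d_n(1-d_n)$ is comparable to $d_n$ when $d_n\le\nicefrac12$ and to $1-d_n$ when $d_n>\nicefrac12$, one gets $\tfrac12(a+b)\le\sum_n d_n(1-d_n)\le a+b$. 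Thus $a+b=\infty$ is the unconstrained generic case (i), while $a+b<\infty$ forces the off-diagonal part of $P$ to be Hilbert–Schmidt; it then remains only to prove $a-b\in\mathbb{Z}$.

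\emph{For the integrality}, I would decompose $\h=\h_-\oplus\h_+$ into the closed spans of $\{e_n:d_n\le\nicefrac12\}$ and $\{e_n:d_n>\nicefrac12\}$ and write $P=\left(\begin{smallmatrix}A&B\\ B^*&C\end{smallmatrix}\right)$. Here $0\le A\le I$ has diagonal $(d_n)_{d_n\le\nicefrac12}$ with $\tr A=a<\infty$, so $A$ is trace class; symmetrically $0\le I-C\le I$ is trace class with $\tr(I-C)=b<\infty$. Expanding $P^2=P$ yields $BB^*=A(I-A)$, $B^*B=C(I-C)$, and, decisively, $AB=B(I-C)$. Hence for an eigenvector $u$ of $C$ with eigenvalue $\mu\in(0,1)$ the vector $Bu$ is nonzero (as $B^*Bu=\mu(1-\mu)u$) and satisfies $A(Bu)=(1-\mu)Bu$; the adjoint relation $CB^*=B^*(I-A)$ shows $B^*$ carries eigenvectors of $A$ at $\lambda\in(0,1)$ to eigenvectors of $C$ at $1-\lambda$. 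As these maps are mutually inverse up to scaling, the multiplicities satisfy $m_A(1-\mu)=m_C(\mu)$ for $\mu\in(0,1)$, so the continuous contributions cancel: writing $k_1=\dim\ker(I-A)$ and $k_0=\dim\ker C$,
\[
a-b=\Big(k_1+\sum_{\lambda\in(0,1)}\lambda\,m_A(\lambda)\Big)-\Big(k_0+\sum_{\mu\in(0,1)}(1-\mu)\,m_C(\mu)\Big)=k_1-k_0\in\mathbb{Z}.
\]
With $Q=0\oplus I_{\h_+}$, this integer is $\dim(\mathrm{ran}\,P\cap\ker Q)-\dim(\ker P\cap\mathrm{ran}\,Q)$, i.e. the essential codimension of the pair, equivalently the index of the Fredholm pair $(P,Q)$ — the interpretation the paper develops.

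\emph{For sufficiency}, given $\{d_n\}$ satisfying (i) or (ii), I would build $P$ blockwise. First peel off the indices with $d_n\in\{0,1\}$ as trivial rank-$0$ and rank-$1$ summands, leaving $d_n\in(0,1)$. The finite-dimensional Pythagorean theorem — a sequence in $[0,1]$ of integer sum $k$ is the diagonal of a rank-$k$ projection — lets me realize finite blocks with integer sums. In case (ii) the hypothesis $a-b\in\mathbb{Z}$ is exactly what permits grouping small and large entries into finitely summing blocks with no leftover, while in case (i) the divergence of $a+b$ lets any finite defect be absorbed into an infinite tail; assembling the blocks as an orthogonal direct sum yields a projection with the prescribed diagonal.

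The main obstacle is the integrality step, and specifically that the off-diagonal corner $B$ is only Hilbert–Schmidt and \emph{not} trace class, so one cannot simply assert $a-b=\tr(P-Q)$. The identity $AB=B(I-C)$ forced by $P^2=P$ is what rescues the argument: it pairs the eigenvalues of $A$ and $C$ in $(0,1)$ as $\lambda\leftrightarrow 1-\lambda$, so their individually possibly divergent contributions to $a$ and $b$ cancel and only the integer $\dim\ker(I-A)-\dim\ker C$ survives.
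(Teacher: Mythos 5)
Your necessity argument --- including the integrality step, which is the crux of the theorem --- is correct, and it takes a genuinely different route from the one developed in this paper. The paper does not reprove Kadison's theorem; what it proves (\autoref{T:ess codim}, via \autoref{P: charact Fredh pairs} and \autoref{P:corners}) is that when $a+b<\infty$ one has $\projone{}-\projtwo{}\in\mathcal L^2$ and $a-b=[\projone{}:\projtwo{}]$, a route that passes through the generic position decomposition and Fredholm-pair theory. You instead exploit the corner relations forced by $P=P^{2}$, namely $BB^{*}=A(I-A)$, $B^{*}B=C(I-C)$ and the intertwining $AB=B(I-C)$, to pair each eigenvalue $\mu\in(0,1)$ of $C$ with the eigenvalue $1-\mu$ of $A$ with equal finite multiplicity; since $A$ and $I-C$ are positive trace-class (hence diagonalizable with summable eigenvalue series), the paired contributions to $a$ and $b$ cancel and leave $a-b=\dim\ker(I-A)-\dim\ker C=\tr(P\wedge Q^{\perp})-\tr(P^{\perp}\wedge Q)$. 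That last expression is exactly the index formula of \autoref{P: charact Fredh pairs}, so your computation amounts to a self-contained, elementary proof of the case of \autoref{T:Arveson} needed here: it avoids Fredholm theory entirely, whereas the paper's machinery buys generality (arbitrary operator ideals, the identification with essential codimension, and the Breuer--Fredholm setting).

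The genuine gap is in sufficiency. An orthogonal direct sum of finite-rank projections, each supported on a finite subset of the basis, has on each block a diagonal summing to an integer (the rank of that block), so your assembly strategy can only produce diagonals admitting a partition into finite subsets with integer sums --- and hypothesis (ii) does not guarantee such a partition exists. Concretely, take small entries $4^{-j}$ and large entries $1-\tfrac{2}{3}3^{-j}$ for $j\ge 1$: then $a=b=\tfrac{1}{3}$, so (ii) holds and Kadison's theorem promises a projection with this diagonal; but for finite index sets $S,L$ the block sum $\sum_{j\in S}4^{-j}+\lvert L\rvert-\tfrac{2}{3}\sum_{j\in L}3^{-j}$ is an integer only when $S=L=\emptyset$, since the two correction terms lie in $[0,\tfrac13]$ (forcing them to be equal), while the reduced denominator of $\sum_{S}4^{-j}$ is a power of $2$ and that of $\tfrac{2}{3}\sum_{L}3^{-j}$ is divisible by $3$ whenever $L\ne\emptyset$. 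Case (i) fails the same way, e.g.\ for the constant diagonal $d_{n}\equiv 1/\pi$, where no finite block has integer sum. Thus the construction must produce projections of infinite rank and corank on infinite blocks, to which the finite-dimensional Horn theorem gives no access; this genuinely infinite construction is the hard content of Kadison's sufficiency proof, and your sketch (``absorbed into an infinite tail'') names the difficulty without resolving it. In short: the necessity half, the part this paper is concerned with, is complete and correct; the sufficiency half is not established by the proposal.
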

Kadison proved that $a-b$ is arbitrarily close to an integer and hence is an integer and referred to that integer as ``curious''.

Let us first express $a$ and $b$ in operator theoretic terms.
Call $\projone{}$ the projection, $\{e_j\}$ the orthonormal basis of $\h$ used for the matrix representation, and $\projtwo{}$ the projection on $\overline {\spans}\{e_j\mid~ d_j> \nicefrac{1}{2}\}.$
Then
\begin{equation}
  \label{e:ab}
  a = \tr(\projtwo{}^\perp \projone{} \projtwo{}^\perp) \quad \text{and}\quad b = \tr( \projtwo{}- \projtwo{}\projone{}\projtwo{}),
\end{equation}
hence if $a+b<\infty$, we have
$\projtwo{}^\perp (\projone{}-\projtwo{}) \projtwo{}^\perp=\projtwo{}^\perp \projone{} \projtwo{}^\perp \in \mathcal L^1$, $\projtwo{}(\projone{}-\projtwo{})\projtwo{}=-(\projtwo{}- \projtwo{}\projone{}\projtwo{})  \in \mathcal L^1$, and
\begin{equation}
  \label{e:a-b}
  a-b = \tr\big( \projtwo{}(\projone{}-\projtwo{})\projtwo{} + \projtwo{}^\perp (\projone{}-\projtwo{}) \projtwo{}^\perp \big).
\end{equation}
If we knew that $\projone{}-\projtwo{}\in \mathcal L^1$, then we would have $a-b=\tr(\projone{}-\projtwo{})$ and then, by \cite[Lemma 4.1]{Eff-1989-MI}, we could conclude that $\tr(\projone{}-\projtwo{})\in\mathbb Z$.
However, since $\projone{}-\projtwo{}$ is not necessarily positive, the fact that its corners are trace-class does not imply that $\projone{}-\projtwo{}$ itself is trace-class.
In fact, Argerami proved in \cite{Arg-2015-IEOT}  that $\projone{}-\projtwo{}\in \mathcal L^2$ and by modifying Effros' argument, he showed that this is sufficient to guarantee that $a-b$ is an integer.
However, neither Kadison's nor Argerami's proof shed much light on the origin of that integer itself.

One of Bill Arveson's sayings was that if you find an integer in operator theory you should look for a Fredholm operator.
Arveson partially extended Kadison's work on the Pythagorean Theorem in \cite{Arv-2007-PNASU} where he studied the diagonals of normal operators with finite spectrum with infinite multiplicity that forms the vertices of a convex polygon in $\mathbb{C}$, infinite co-infinite projections being a degenerate special case.
He also found an ``index obstruction'' for their diagonals which depended on the following result.
\begin{theorem}[\protect{\cite[Theorem 3]{Arv-2007-PNASU}}]
  \label{T:Arveson}
  Let $\projone{},\projtwo{}$ be projections in $\B(\h)$ with $\projone{}-\projtwo{}\in \mathcal L^2$.
  Then $\projone{}\wedge \projtwo{}^\perp$ and $\projone{}^\perp \wedge \projtwo{}$ are finite, $\projtwo{}(\projone{}-\projtwo{})\projtwo{}$ and $\projtwo{}^\perp(\projone{}-\projtwo{})\projtwo{}^\perp$ belong to $\mathcal L^1$, and
  \begin{equation*}
    \tr\big(\projtwo{}(\projone{}-\projtwo{})\projtwo{}+\projtwo{}^\perp(\projone{}-\projtwo{})\projtwo{}^\perp\big)= \tr(\projone{}\wedge \projtwo{}^\perp)-\tr(\projone{}^\perp \wedge \projtwo{})\in \mathbb Z.
  \end{equation*}
\end{theorem}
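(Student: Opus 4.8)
The plan is to exploit the ``supersymmetric'' structure of a pair of projections. Set $r := \projone{}-\projtwo{}$ and $b := \projone{}+\projtwo{}-1$, both self-adjoint. A direct expansion using $\projone{}^2=\projone{}$ and $\projtwo{}^2=\projtwo{}$ yields the two identities $r^2+b^2=1$ and $rb+br=0$; in particular $r^2 = 1-b^2 \in \mathcal L^1$ since $r\in\mathcal L^2$ by hypothesis. Because $r$ is compact and self-adjoint, I would first record the standard fact that its eigenvalue $+1$ occurs precisely on $\operatorname{ran}\projone{}\cap\ker\projtwo{}$ and its eigenvalue $-1$ precisely on $\ker\projone{}\cap\operatorname{ran}\projtwo{}$, i.e.\ on the ranges of $\projone{}\wedge\projtwo{}^\perp$ and $\projone{}^\perp\wedge\projtwo{}$. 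Compactness of $r$ forces these eigenspaces to be finite-dimensional, which is the first assertion.

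For the trace-class claim I would use the elementary identities $\projtwo{} r^2 \projtwo{} = \projtwo{}-\projtwo{}\projone{}\projtwo{}$, whence $\projtwo{}(\projone{}-\projtwo{})\projtwo{} = -\projtwo{} r^2 \projtwo{}$, and $\projtwo{}^\perp r^2 \projtwo{}^\perp = \projtwo{}^\perp\projone{}\projtwo{}^\perp = \projtwo{}^\perp(\projone{}-\projtwo{})\projtwo{}^\perp$, all obtained by expanding $r^2=\projone{}+\projtwo{}-\projone{}\projtwo{}-\projtwo{}\projone{}$. Since $r^2\in\mathcal L^1$, its compressions to $\projtwo{}$ and $\projtwo{}^\perp$ are trace class, giving the second assertion, and moreover, using cyclicity,
\[
  \tr\big(\projtwo{}(\projone{}-\projtwo{})\projtwo{} + \projtwo{}^\perp(\projone{}-\projtwo{})\projtwo{}^\perp\big) = \tr\big((\projtwo{}^\perp-\projtwo{})\,r^2\big) = \tr(r^3)-\tr(b r^2),
\]
where I inserted $\projtwo{}^\perp-\projtwo{}=1-2\projtwo{}=r-b$ (read off from $\projtwo{}=\tfrac12(1+b-r)$) and used that both $r^3$ (a product of an $\mathcal L^2$ and an $\mathcal L^1$ operator) and $b r^2$ lie in $\mathcal L^1$.

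The heart of the matter is to evaluate the two traces on the right via the spectral resolution $r=\sum_\lambda \lambda P_\lambda$. The anticommutation $rb+br=0$ means $b$ carries the eigenspace $E_\lambda$ into $E_{-\lambda}$; for $\lambda\neq 0$ these are orthogonal, so $\tr(b P_\lambda)=\tr(P_\lambda b P_\lambda)=0$ and hence $\tr(br^2)=\sum_\lambda \lambda^2\tr(bP_\lambda)=0$ by trace-norm convergence of $r^2=\sum_\lambda \lambda^2 P_\lambda$. For $\lambda\neq\pm1$ one has $b^2=1-\lambda^2>0$ on $E_\lambda$, so $b$ restricts to an injection $E_\lambda\to E_{-\lambda}$ and, symmetrically, $E_{-\lambda}\to E_\lambda$; finite-dimensionality (for $\lambda\neq0$) then gives $\dim E_\lambda=\dim E_{-\lambda}$. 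Consequently the contributions of $\pm\lambda$ to $\tr(r^3)=\sum_\lambda \lambda^3\dim E_\lambda$ cancel for every $\lambda\neq\pm1$, leaving $\tr(r^3)=\dim E_{+1}-\dim E_{-1}=\tr(\projone{}\wedge\projtwo{}^\perp)-\tr(\projone{}^\perp\wedge\projtwo{})$, which is manifestly an integer.

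I expect the main obstacle to be the bookkeeping around trace-class membership: one must verify that $r^3$ and $b r^2$ are \emph{separately} in $\mathcal L^1$ before splitting the trace, and that the regrouping of $\pm\lambda$ in $\tr(r^3)$ is legitimate, which follows from absolute convergence since $\sum_\lambda|\lambda|^3\dim E_\lambda\le\norm{r}\,\norm{r}_2^2<\infty$. The conceptual crux, however, is recognizing the two relations $r^2+b^2=1$ and $rb+br=0$, from which the eigenspace symmetry $\dim E_\lambda=\dim E_{-\lambda}$, and hence the cancellation that isolates the index $\dim E_{+1}-\dim E_{-1}$, follows almost immediately.
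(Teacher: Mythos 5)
Your proof is correct, but it follows a genuinely different route from the paper's. The paper obtains this statement (in the generalized form of \autoref{P:corners}) from the structure theory of two projections: it splits off $\projone{}\wedge\projtwo{}$, $\projone{}\wedge\projtwo{}^\perp$, $\projone{}^\perp\wedge\projtwo{}$, puts the remaining parts $\projone{}_0,\projtwo{}_0$ in generic position, and reads everything off the $2\times2$ picture of \autoref{T:genericposition}: the corner identities (\ref{e:corners}) show the two $s^2$ blocks have equal finite trace and cancel, \autoref{P:in J} characterizes when $\projone{}-\projtwo{}$ lies in a given ideal, and \autoref{P: charact Fredh pairs} identifies the resulting integer as the essential codimension $[\projone{}:\projtwo{}]$. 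You instead use the Avron--Seiler--Simon ``supersymmetric'' relations $r^2+b^2=1$ and $rb+br=0$ for $r=\projone{}-\projtwo{}$, $b=\projone{}+\projtwo{}-1$, and extract the integer from the eigenvalue pairing $\dim E_\lambda=\dim E_{-\lambda}$ for $\lambda\neq 0,\pm1$, reducing the trace to $\tr(r^3)=\dim E_{+1}-\dim E_{-1}$. Your analytic bookkeeping is in order (separate $\mathcal L^1$ membership of $r^3$ and $br^2$, absolute convergence justifying the regrouping); incidentally, $\tr(br^2)=0$ also follows in one line from cyclicity and anticommutation, since $\tr(br^2)=\tr(rbr)=-\tr(br^2)$. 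As for what each approach buys: yours is shorter and self-contained for the Hilbert--Schmidt case, is essentially the original Avron--Seiler--Simon/Arveson argument (their formula $\tr((\projone{}-\projtwo{})^{2n+1})$ with $n=1$), and exhibits the integer transparently as a difference of two eigenspace dimensions. The paper's route is heavier but works verbatim for an arbitrary proper operator ideal $\mathcal J$ in place of $\mathcal L^2$ (yielding the equivalences (i)--(iii) of \autoref{P:corners}), and---the point of the whole article---it identifies the integer as the index of the Fredholm pair, i.e.\ the essential codimension $[\projone{}:\projtwo{}]$; to recover that identification from your argument you would still need to invoke \autoref{P: charact Fredh pairs}.
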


Whenever we have two projections, $\projone{}$ and $\projtwo{}$, we denote by $\projtwo{}\mid_{\projone{}\h}$ the operator in $\B(\projone{}\h, \projtwo{}\h)$.
Then a key step in Arveson's proof is the fact that if $\projone{}-\projtwo{}\in \mathcal L^2$, then
\begin{equation}
  \label{e: Arveson index}
  \projtwo{}\mid_{\projone{}\h}\quad\text{is Fredholm and}\quad \idx(\projtwo{}\mid_{\projone{}\h})= \tr\big(\projtwo{}(\projone{}-\projtwo{})\projtwo{}+\projtwo{}^\perp(\projone{}-\projtwo{})\projtwo{}^\perp\big).
\end{equation}

Although Arveson did not state so explicitly, embedded in his proofs one can also find the fact that using the notations established above, if $a+b< \infty$, then indeed $\projone{}-\projtwo{}\in \mathcal L^2$ and hence $a-b= \idx(\projtwo{}\mid_{\projone{}\h})$, which explains why $a-b$ is an integer.
What remains to be explained is the role of $\projtwo{}\mid_{\projone{}\h}$ and the significance of its index.
Note that $\idx (\projtwo{}\mid_{\projone{}\h}) = -\idx (\projone{}\mid_{\projtwo{}\h})$ since $(\projtwo{}\mid_{\projone{}\h})^{*} = \projone{}\mid_{\projtwo{}\h}$.

A similar question arises from another proof that $a-b$ is an integer which was obtained in \cite[Corollary 3.6]{KNZ-2009-JFA}.
Let us briefly sketch the original computation (reformulated in new notation) as it introduces the connections we want to illustrate.

Let $w$ be an isometry with range $\projone{}$, let $\Lambda:=\{j\mid d_j> \nicefrac{1}{2}\}$,
\begin{align*}
  f_j                                & :=
       \begin{cases}
         \frac{1}{\sqrt {d_j}}w^*e_j & d_j\ne 0 \\
         e_1                         & d_j=0    \\
       \end{cases}                                                                      \\
  f                                  & : = \sum_{j\in \Lambda} e_j\otimes f_j           \\
  f                                  & =v|f| \quad\text{the polar decomposition of $f$} \\
  t_a                                & :=  \sum_{j\not \in \Lambda}d_j f_j\otimes f_j   \\
  t_b                                & :=  \sum_{j \in \Lambda}(1-d_j) f_j\otimes f_j
\end{align*}
Then $\|f_j\|=1$ for all $j$, $t_a, t_b\in \mathcal L^1_+$, and $1 = \sum_j d_j f_j \otimes f_j$, hence
\begin{align*}
  t_a-t_b   & = 1- f^*f= v^*v(t_a-t_b)+ 1-v^*v \\
  E(\projtwo{}-vv^*) & = - E(v(t_a-t_b)v^*).
\end{align*}
Hence
\begin{equation*}
  a-b=\tr(t_a-t_b)= - \tr (\projtwo{}-vv^*) + \tr(1-v^*v)\in \mathbb Z.
\end{equation*}

It is then immediate to see (but was not remarked explicitly in \cite{KNZ-2009-JFA}), that
\begin{equation*}
  a-b = - \idx (v^*\mid_{\projtwo{}\h}).
\end{equation*}
Notice that $f$ can be interpreted as the frame transform (the analysis operator) of the Bessel sequence $\{f_j\}_{j\in \Lambda}$ and $v$ as the frame transform of the associated Parseval frame.
While this construction provides indeed a proof that $v^*\mid_{\projtwo{}\h}$ is Fredholm, a natural question is why $ \idx (v^*\mid_{\projtwo{}\h})=  \idx (\projone{}\mid_{\projtwo{}\h})$ as can be obtained from Arveson's work.
To answer it, notice first that since $f^*f$ is a trace-class perturbation of the identity, it is Fredholm and hence so are $|f|$ and $f^*=|f|v^*$.
Furthermore $ \idx (f^*\mid{\projtwo{}\h})= \idx(v^*\mid{\projtwo{}\h})$ since $\idx(|f|)=0$.
Next
\begin{equation*}
  w^*\projtwo{}= \sum _{j\in \Lambda} w^*e_j\otimes e_j=  \sum _{j\in \Lambda}\sqrt{d_j} f_j\otimes e_j = fd
\end{equation*}
where $d:=  \sum _{j\in \Lambda}\sqrt{d_j}e_j\otimes e_j\ge \frac{1}{\sqrt{2}} \projtwo{}$ is invertible in $\B(\projtwo{}\h)$.
Thus $w^*\mid_{\projtwo{}\h}$ is also Fredholm in $\B(\projtwo{}\h, \h)$ and
\begin{equation}
  \label{e:a-b via w*}
  a-b = -\idx (w^*\mid_{\projtwo{}\h}).
\end{equation}
It is then immediate to verify (see also (\ref{e:equiv}) below) that $ \idx (w^*\mid_{\projtwo{}\h})=  \idx (\projone{}\mid_{\projtwo{}\h})$ as obtained by Arveson.

However, neither the proof due to Arveson nor the one in \cite{KNZ-2009-JFA}) provides a natural explanation of the role of $w^*\mid_{\projtwo{}\h}$ or $\projone{}\mid_{\projtwo{}\h}.$

The goal of our paper is to provide an explanation of that role in the context of the notion of essential codimension $[\projone{}:\projtwo{}]$ of a pair of projections $\projone{}$ and $\projtwo{}$ with $\projone{}-\projtwo{}\in \K$ that was introduced in the BDF theory (see \cite{BDF-1973-PoaCoOT} and Section 2), or of the more general notion of index of a Fredholm pair of projections, introduced by Avron, Seiler, and Simon in \cite{ASS-1994-JFA}.

Combining Arveson's work with the study of Fredholm pairs and essential codimension, one can provide a natural identification of Kadison's integer with the essential codimension of a pair of projections.
In the notations of \autoref{T:Kadison15} we have:
\begin{theorem}
  \label{T:ess codim}
  Let $\projone{}\in \B(\h)$ be a projection such that $a+b< \infty$ and let $\projtwo{}$ be the projection on $\overline {\spans}\{e_j\mid~ d_j> \nicefrac{1}{2}\}$.
  Then $\projone{}-\projtwo{}\in \mathcal L^2$ and $a-b = [\projone{}:\projtwo{}].$
\end{theorem}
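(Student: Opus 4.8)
The plan is to proceed in two stages: first establish that $\projone{}-\projtwo{}\in\mathcal L^2$, which makes \autoref{T:Arveson} and the index identity \eqref{e: Arveson index} available, and then read off $a-b=[\projone{}:\projtwo{}]$ from the identification of essential codimension with an index. The engine of the first stage is the elementary algebraic identity
\begin{equation*}
  \projone{}-\projtwo{} = \projone{}\projtwo{}^\perp - \projone{}^\perp \projtwo{},
\end{equation*}
verified by expanding $\projone{}\projtwo{}^\perp = \projone{}-\projone{}\projtwo{}$ and $\projone{}^\perp\projtwo{} = \projtwo{}-\projone{}\projtwo{}$.

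For the first stage I would observe that each summand on the right is Hilbert--Schmidt precisely because $a,b<\infty$. Using $\projone{}^2=\projone{}$ one computes $\snorm{\projone{}\projtwo{}^\perp}_2^2=\tr(\projtwo{}^\perp\projone{}\projtwo{}^\perp)=a$ and, likewise, $\snorm{\projone{}^\perp\projtwo{}}_2^2=\tr(\projtwo{}\projone{}^\perp\projtwo{})=b$, both via \eqref{e:ab}. Hence $\projone{}\projtwo{}^\perp,\projone{}^\perp\projtwo{}\in\mathcal L^2$ and therefore $\projone{}-\projtwo{}\in\mathcal L^2$; since moreover $\projone{}\projtwo{}^\perp$ and $\projone{}^\perp\projtwo{}$ are orthogonal in the Hilbert--Schmidt inner product (their pairing is $\tr(\projtwo{}^\perp\projone{}\projone{}^\perp\projtwo{})=0$), one even gets $\snorm{\projone{}-\projtwo{}}_2^2=a+b$. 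This is exactly the $\mathcal L^2$ assertion of the theorem, and it is the only place the hypothesis $a+b<\infty$ is used.

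For the second stage, with $\projone{}-\projtwo{}\in\mathcal L^2$ in hand I would invoke \eqref{e: Arveson index}: the operator $\projtwo{}\mid_{\projone{}\h}$ is Fredholm and
\begin{equation*}
  \idx(\projtwo{}\mid_{\projone{}\h}) = \tr\big(\projtwo{}(\projone{}-\projtwo{})\projtwo{}+\projtwo{}^\perp(\projone{}-\projtwo{})\projtwo{}^\perp\big).
\end{equation*}
By \eqref{e:a-b} the right-hand side equals $a-b$, so $a-b=\idx(\projtwo{}\mid_{\projone{}\h})$. It then remains only to match this index with the essential codimension: by the definition of $[\projone{}:\projtwo{}]$ as the index of $\projtwo{}\mid_{\projone{}\h}$ recorded in Section~2, this yields $a-b=[\projone{}:\projtwo{}]$.

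The genuinely new content is the short computation of the first stage; the rest is a matter of assembling \eqref{e:a-b}, \eqref{e: Arveson index}, and the definition of essential codimension. Accordingly, the only point requiring care is ensuring that the sign convention for $[\projone{}:\projtwo{}]$ fixed in Section~2 is the one for which $[\projone{}:\projtwo{}]=\idx(\projtwo{}\mid_{\projone{}\h})$ (equivalently $-\idx(\projone{}\mid_{\projtwo{}\h})$); together with Arveson's index \eqref{e: Arveson index} this is the only convention consistent with the stated equality. As a sanity check, when $\projone{}-\projtwo{}$ happens to be trace class the off-diagonal corners $\projtwo{}(\projone{}-\projtwo{})\projtwo{}^\perp$ and $\projtwo{}^\perp(\projone{}-\projtwo{})\projtwo{}$ are traceless, so the formula collapses to $a-b=\tr(\projone{}-\projtwo{})$, recovering the elementary case discussed in the introduction via Effros' lemma.
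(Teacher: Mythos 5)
Your proposal is correct, but it routes the two halves of the theorem through different machinery than the paper does. The paper's proof is essentially three lines: by (\ref{e:ab}) the hypothesis $a+b<\infty$ says precisely that $\projtwo{}(\projone{}-\projtwo{})\projtwo{}$ and $\projtwo{}^\perp(\projone{}-\projtwo{})\projtwo{}^\perp$ are trace class, and then a single appeal to \autoref{P:corners} with $\mathcal J=\mathcal L^2$ delivers both conclusions at once --- the implication (ii)$~\Rightarrow~$(i) gives $\projone{}-\projtwo{}\in\mathcal L^2$, and the ``furthermore'' clause gives $[\projone{}:\projtwo{}]=\tr\big(\projtwo{}(\projone{}-\projtwo{})\projtwo{}+\projtwo{}^\perp(\projone{}-\projtwo{})\projtwo{}^\perp\big)$, which equals $a-b$ by (\ref{e:a-b}). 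You instead prove the $\mathcal L^2$ membership by hand from the decomposition $\projone{}-\projtwo{}=\projone{}\projtwo{}^\perp-\projone{}^\perp\projtwo{}$ with $\snorm{\projone{}\projtwo{}^\perp}_2^2=a$ and $\snorm{\projone{}^\perp\projtwo{}}_2^2=b$; this is correct, elementary, and is in substance the identity $(\projone{}-\projtwo{})^2=-\projtwo{}(\projone{}-\projtwo{})\projtwo{}+\projtwo{}^\perp(\projone{}-\projtwo{})\projtwo{}^\perp$ that the paper records right after \autoref{P:corners} (taking traces there also yields your bonus equality $\snorm{\projone{}-\projtwo{}}_2^2=a+b$). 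For the second half you then quote Arveson's index identity (\ref{e: Arveson index}). That is where the two arguments genuinely part ways: the paper's announced goal (end of the introduction) is a proof ``inspired by, but independent of, the work by Arveson,'' and \autoref{P:corners} exists precisely so that (\ref{e: Arveson index}) never needs to be cited; your argument uses Arveson's theorem as a black box, though within this paper you could swap that citation for the ``furthermore'' clause of \autoref{P:corners} and recover the independence at no cost. What your route buys is brevity and self-containment of the Hilbert--Schmidt step; what the paper's route buys is that the same lemma simultaneously handles arbitrary operator ideals and identifies the index with $\tr(\projone{}\wedge\projtwo{}^\perp)-\tr(\projone{}^\perp\wedge\projtwo{})$. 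Finally, your closing worry about sign conventions needs no ``only consistent convention'' argument (which would be circular if it were really needed): Section 2 fixes $[\projone{}:\projtwo{}]:=\idx(\projtwo{}\mid_{\projone{}\h})$ outright, immediately after (\ref{e:equiv}), so the identification in your second stage is just the definition.
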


To understand the simple proof of this result, and for the convenience of the readers not familiar with the notions of Fredholm pairs, essential codimension, and the work of Arveson in \cite{Arv-2007-PNASU}, we will provide in Section 2 a self-contained short presentation of the relevant results of the theory of Fredholm pairs.
We have strengthened several results and generalized them to the case when $\projone{}-\projtwo{}$ belongs to an arbitrary (two-sided) operator ideal $\mathcal J$ rather than just the Hilbert--Schmidt ideal $\mathcal L^2$.

Since Fredholm pairs have found most of their applications in the theory of spectral flows in type I or type II von Neumann algebras, we will conclude Section 2 with a very brief foray into the case when the notion of Fredholm operators and indices are taken relative to a semifinite von Neumann algebra (also called Breuer--Fredholm, or more precisely $\tau$-Breuer--Fredholm operators).

In Section 3 we will assemble the results previously collected into a proof of \autoref{T:ess codim} that is inspired by, but independent of, the work by Arveson.
Then we will extend part of \autoref{P:corners} to positive contractions.
We will then use the same techniques to identify an integer appearing in the study by Bownik and Jasper of the diagonals of selfadjoint operators with finite spectrum and also to simplify the proof of one of the key results of that paper (see \cite{BJ-2013-TAMS}).

We thank R.~Douglas for having suggested to the first named author of this paper to consider a possible connection between the frame transform approach originally used and essential codimension.

\section{Essential codimension and Fredholm pairs}

In this paper $\h$ denotes a separable infinite dimensional complex Hilbert space and
$\K$ the C*-algebra of compact operators on $\h$.

The notion of essential codimension of two projections was first introduced
in (\cite[Remark 4.9]{BDF-1973-PoaCoOT}).
\begin{definition}[\protect{\cite{BDF-1973-PoaCoOT}}]
  \label{D:esscodim}
  Given projections $\projone{},\projtwo{} \in\B(\h)$ for which $\projone{}-\projtwo{} \in \K(\h)$, the \emph{essential codimension} $[\projone{}:\projtwo{}]$ of $\projone{}$ and $\projtwo{}$ is defined by:
  \begin{equation*}
    [\projone{}:\projtwo{}] :=
    \begin{cases}
      \tr(\projone{})-\tr(\projtwo{}) & \tr(\projone{})<\infty, \tr(\projtwo{})<\infty                                           \\
      \idx(v^{*}w)    & \tr(\projone{})= \tr(\projtwo{})= \infty, \, w,\,v\text{ isometries},\,ww^*= \projone{}, vv^*=\projtwo{}. \\
    \end{cases}
  \end{equation*}
\end{definition}
This definition depends on the fact that $(v^*w)^*(v^*w)= 1+ w^*(\projtwo{}-\projone{})w$ and similarly for $w^*v$.
Thus setting $\pi$ to be the projection onto the Calkin algebra, we see that $\pi(v^*w)$ is unitary and hence $v^*w$ is Fredholm.
Also, if $\tilde w $ and $\tilde v$ are another pair of isometries with ranges $\projone{}$ and $\projtwo{}$ respectively, then
\begin{equation*}
  \idx(v^*w)= \idx( v^*\tilde v \tilde v^* \tilde w \tilde w^* w)= \idx(\tilde v^*\tilde w)
\end{equation*}
since $w^*\tilde w$ and $v^*\tilde v$ are unitaries.
This shows that $[\projone{}:\projtwo{}]$ does not depend on the choice of the isometries $w$ and $v$.

Some properties of the essential codimension were presented without proof in \cite{Bro-1981-JOT} and a complete exposition can be found in \cite{BL-2012-CJM}, together with an interesting application to liftability of projections in the corona algebra of, among others, $C([0,1])\otimes \K$.

Independently, and without reference to essential codimension, Avron, Seiler, and Simon defined in \cite{ASS-1994-JFA} the more general notion of Fredholm pairs of projections.
\begin{definition}[\protect{\cite{ASS-1994-JFA}}]
  \label{D:Fredhpair}
  A pair of projections $(\projone{}, \projtwo{})$ in $\B(\h)$ is said to be Fredholm if $\projtwo{}\mid_{\projone{}\h}$ is a Fredholm operator as an element of $\B(\projone{}\h, \projtwo{}\h)$.
  The index of the pair is defined to be $ \idx(\projtwo{}\mid_{\projone{}\h})$.
\end{definition}

Notice that if $v\in \B(\h_4,\h_3)$ and $w\in \B(\h_1,\h_2)$ are unitaries, then
\begin{equation}
  \label{e: Fredh diff spaces}
  g\in \B(\h_2,\h_3) ~\text { is Fredholm } ~\Leftrightarrow ~ v^*gw\in \B(\h_1, \h_4) ~\text{ is Fredholm.}
\end{equation}

and then
\begin{equation}
  \label{e: Fredh as compression}
  \idx(v^*gw)= \idx g.
\end{equation}
Thus if $w$ and $v$ are isometries with ranges $\projone{}$ and $\projtwo{}$ respectively, then
\begin{align}
  \label{e:equiv}
  \projtwo{}\mid_{\projone{}\h}\in \B(\projone{}\h, \projtwo{}\h)\text { is Fredholm }
  & ~\Leftrightarrow  ~ v^*w = v^*\mid_{\projone{}\h}w\in \B(\h) \text { is Fredholm} \\
  & ~\Leftrightarrow  ~v^*\mid_{\projone{}\h}\in \B(\projone{}\h, \h) \text { is Fredholm}\notag
\end{align}

Recall that $v^{*}w$ is Fredholm if and only if $\pi(v^{*}w)$ is invertible.
We have seen above that if $\projone{}-\projtwo{}\in \K$, then $\pi(v^{*}w)$ is unitary and hence $v^*w$ is Fredholm, that is $(\projone{},\projtwo{})$ is a Fredholm pair and by (\ref{e: Fredh as compression}), $[\projone{}:\projtwo{}]= \idx(\projtwo{}\mid_{\projone{}\h})$.
For consistency we will henceforth write $[\projone{}:\projtwo{}]:=\idx(\projtwo{}\mid_{\projone{}\h})$ whenever $(\projone{},\projtwo{})$ is a Fredholm pair even when $\projone{}-\projtwo{}\not \in \K$.

Soon after \cite{ASS-1994-JFA}, W.~Amrein, K.~Sinha \cite{AS-1994-LAA} realized that the proofs in \cite{ASS-1994-JFA} could be considerably simplified by reducing to the case of projections in generic position.
This notion was first introduced by Dixmier \cite{Dix-1948-RS} (he called them in ``position {\it p}'') and independently by Krein, Kranosleskii and Milman \cite{KKM-1948-STIMANS}, and further studied by Davis \cite{Dav-1958-ASMS}, Halmos\cite{Hal-1969-TAMS} (he called them ``generic pairs''), and others.

\begin{definition}
  \label{D:genericposition}
  Two projections $\projone{}, \projtwo{}\in \B(\h)$ are said to be in \emph{generic position} if
  \begin{equation*}
    \projone{} \wedge \projtwo{} = \projone{} \wedge \projtwo{}^{\perp} = \projone{}^{\perp} \wedge \projtwo{} = \projone{}^{\perp} \wedge \projtwo{}^{\perp}=0.
  \end{equation*}
\end{definition}
When just the first three projections are zero, the pair $\projone{},\projtwo{}$ is in generic position in $\B((\projone{}\vee \projtwo{})\h)$ and when there is no risk of confusion we will simply call them in generic position.
For the readers' convenience we will collect here some results on projections in generic position.
Good references can be found in the texts of Str\u{a}til\u{a}  \cite[17.15]{Str-1981}, Takesaki \cite[pages 306-308]{Tak-2002-TOOAI} and in the article of Amrein and Sinha \cite{AS-1994-LAA}.
\begin{theorem}
  \label{T:genericposition}
  Let $\projone{}, \projtwo{}\in \B(\h)$ be two projections.
  \begin{enumerate}
  \item Then the projections $\projone{}_0:= \projone{}- \projone{}\wedge \projtwo{}- \projone{}\wedge \projtwo{}^\perp$ and $\projtwo{}_0:= \projtwo{} - \projone{}\wedge \projtwo{}- \projone{}^\perp\wedge \projtwo{}$ are in generic position (in the Hilbert space $(\projone{}_0\vee \projtwo{}_0)\h$.)
  \end{enumerate}
  Suppose $\projone{}, \projtwo{}$ are in generic position and let $N:=\{\projone{},\projtwo{}\}''$ be the von Neumann algebra generated by them.
  Then
  \begin{enumerate}[resume]
  \item $N_\projtwo{} (=(\projtwo{}N\projtwo{}\mid_{\projtwo{}\h})$ is masa of $N$ and $N$ can be identified with $M_2(N_\projtwo{})$.
  \item There are two positive injective contractions $c$ and $s$ in $N_\projtwo{}$ with $c^2+s^2=1$ (the identity operator of $N_\projtwo{}$) such that $\projone{}$ and $\projtwo{}$ can be identified with
    \begin{equation}
      \label{e:p,q}
      \projtwo{} =
      \begin{pmatrix}
        1 & 0 \\
        0 & 0 \\
      \end{pmatrix}
      \quad\text{and}\quad
      \projone{} =
      \begin{pmatrix}
        c^2 & cs \\
        cs & s^2 \\
      \end{pmatrix}.
    \end{equation}
  \item $c=(\projtwo{}\projone{}\projtwo{})^{\nicefrac{1}{2}}\mid_{\projtwo{}\h}$, $s=(\projtwo{}\projone{}^\perp \projtwo{})^{\nicefrac{1}{2}}\mid_{\projtwo{}\h}$, and $\|s\|=\|\projone{}-\projtwo{}\|$.
  \end{enumerate}
\end{theorem}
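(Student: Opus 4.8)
The plan is to handle the reduction statement~(i) by elementary subspace geometry, and then to derive the structural statements~(ii)--(iv) from the classical two--projection decomposition by building an explicit $2\times 2$ matrix model over the corner $N_{\projtwo{}}$. For~(i), I would first record that the four projections $P_1:=\projone{}\wedge\projtwo{}$, $P_2:=\projone{}\wedge\projtwo{}^\perp$, $P_3:=\projone{}^\perp\wedge\projtwo{}$, $P_4:=\projone{}^\perp\wedge\projtwo{}^\perp$ are mutually orthogonal, since any two lie in orthogonal ranges of $\projone{},\projone{}^\perp$ or of $\projtwo{},\projtwo{}^\perp$. Hence $\projone{}_0=\projone{}-P_1-P_2\le\projone{}$ and $\projtwo{}_0=\projtwo{}-P_1-P_3\le\projtwo{}$ are genuine projections with $\projtwo{}\h=\projtwo{}_0\h\oplus P_1\h\oplus P_3\h$ orthogonally. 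I would then check the four generic-position conditions in $\h_0:=(\projone{}_0\vee\projtwo{}_0)\h$. Two are immediate: $\projone{}_0\wedge\projtwo{}_0=0$ because a common vector lies in $\projone{}\h\cap\projtwo{}\h=P_1\h$ yet is orthogonal to $P_1\h$; and $\projone{}_0^\perp\wedge\projtwo{}_0^\perp=0$ in $\h_0$ because $\projone{}_0\h+\projtwo{}_0\h$ is dense in $\h_0$. The other two are symmetric, so it suffices to treat $\projone{}_0\wedge\projtwo{}_0^\perp$ (orthocomplement in $\h_0$): for $\xi\in\projone{}_0\h$ with $\xi\perp\projtwo{}_0\h$, the decomposition of $\projtwo{}\h$ forces $\projtwo{}\xi\in P_3\h\subseteq\projone{}^\perp\h$, whence $\projone{}\projtwo{}\xi=0$; then $\snorm{\projtwo{}\xi}^2=\langle\projone{}\xi,\projtwo{}\xi\rangle=\langle\xi,\projone{}\projtwo{}\xi\rangle=0$ gives $\projtwo{}\xi=0$, so $\xi\in\projone{}\wedge\projtwo{}^\perp=P_2\h$, and since $\xi\perp P_2\h$ we get $\xi=0$.

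For~(ii)--(iv) I assume $\projone{},\projtwo{}$ in generic position and set $c:=(\projtwo{}\projone{}\projtwo{})^{\nicefrac{1}{2}}\mid_{\projtwo{}\h}$ and $s:=(\projtwo{}\projone{}^\perp\projtwo{})^{\nicefrac{1}{2}}\mid_{\projtwo{}\h}$, positive contractions in $N_{\projtwo{}}$. Since $\projtwo{}\projone{}\projtwo{}+\projtwo{}\projone{}^\perp\projtwo{}=\projtwo{}$ they satisfy $c^2+s^2=1$ on $\projtwo{}\h$, and being functions of $c$ they commute; generic position gives injectivity, since $c\xi=0$ forces $\projone{}\xi=0$ hence $\xi\in\projone{}^\perp\wedge\projtwo{}=0$, and dually for $s$ via $\projone{}\wedge\projtwo{}=0$, so $cs$ is injective with dense range. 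Computing $(\projtwo{}^\perp\projone{}\projtwo{}\mid_{\projtwo{}\h})^*(\projtwo{}^\perp\projone{}\projtwo{}\mid_{\projtwo{}\h})=(\projtwo{}\projone{}\projtwo{}-\projtwo{}\projone{}\projtwo{}\projone{}\projtwo{})\mid_{\projtwo{}\h}=(cs)^2$, the polar decomposition reads $\projtwo{}^\perp\projone{}\projtwo{}\mid_{\projtwo{}\h}=w\,cs$ with $w$ a partial isometry $\projtwo{}\h\to\projtwo{}^\perp\h$ of initial space $\overline{\operatorname{ran}(cs)}=\projtwo{}\h$. Transporting through the unitary $U\colon\projtwo{}\h\oplus\projtwo{}\h\to\h$, $U(\xi,\zeta)=\xi+w\zeta$, sends $\projtwo{}$ to $\begin{pmatrix}1&0\\0&0\end{pmatrix}$ and, using $w^*w=1$ together with idempotency of $\projone{}$ to pin down the lower corner, sends $\projone{}$ to $\begin{pmatrix}c^2&cs\\cs&s^2\end{pmatrix}$, which is~(iii).

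Statement~(iv) then follows: the formulas for $c,s$ are the definitions, and $\norm{\projone{}-\projtwo{}}=\norm{s}$ comes from the direct computation $(\projone{}-\projtwo{})^2=\begin{pmatrix}s^2&0\\0&s^2\end{pmatrix}$, so $\norm{\projone{}-\projtwo{}}^2=\norm{s^2}=\norm{s}^2$. For~(ii), the element of $N$ corresponding to $\projtwo{}\projone{}\projtwo{}^\perp$ is $\begin{pmatrix}0&cs\\0&0\end{pmatrix}$, whose polar decomposition inside $N$ has partial-isometry part $e_{12}$ precisely because $cs$ is injective with dense range (so its support and range projections are $1$); thus $N=\{\projone{},\projtwo{}\}''$ contains all four matrix units and is identified with $M_2(N_{\projtwo{}})$, where $N_{\projtwo{}}=\{c\}''$ is abelian, yielding the maximal abelianness asserted in~(ii).

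The main obstacle is proving that the partial isometry $w$ is actually a \emph{unitary} onto $\projtwo{}^\perp\h$, i.e.\ that $\projtwo{}^\perp\projone{}\projtwo{}\mid_{\projtwo{}\h}$ has dense range; this is where the two less obvious generic-position hypotheses enter. Indeed, if $\eta\in\projtwo{}^\perp\h$ is orthogonal to that range, then $\projtwo{}\projone{}\eta=0$, so $\projone{}\eta\in\projone{}\h\cap\projtwo{}^\perp\h=\projone{}\wedge\projtwo{}^\perp=0$, forcing $\eta\in\projone{}^\perp\h\cap\projtwo{}^\perp\h=\projone{}^\perp\wedge\projtwo{}^\perp=0$. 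The same care establishes that the support of $cs$ is the identity, which is what places the matrix units genuinely inside $N$; once these injectivity and surjectivity facts are secured, everything remaining is routine bookkeeping within the $2\times 2$ model.
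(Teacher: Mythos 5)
Your proof is correct, but there is no in-paper argument to compare it with: the paper states this theorem as a compendium of classical facts about two projections, delegating the proofs to Str\u{a}til\u{a}, Takesaki, and Amrein--Sinha; the only proof fragment it records is the factorization (\ref{e:q-p}), an alternative route to $\norm{\projone{}-\projtwo{}}=\norm{s}$, which you instead obtain from the computation $(\projone{}-\projtwo{})^2=s^2\oplus s^2$. Your reconstruction is the classical Halmos-style argument, and it deploys the hypotheses exactly where they must be used: $\projone{}\wedge\projtwo{}=\projone{}^\perp\wedge\projtwo{}=0$ give injectivity of $s$ and $c$, while $\projone{}\wedge\projtwo{}^\perp=\projone{}^\perp\wedge\projtwo{}^\perp=0$ are precisely what make the polar partial isometry $w$ of $\projtwo{}^\perp\projone{}\projtwo{}\mid_{\projtwo{}\h}$ onto $\projtwo{}^\perp\h$, hence unitary; likewise the device of producing the matrix unit $e_{12}$ inside $N$ from the polar decomposition of $\projtwo{}\projone{}\projtwo{}^\perp$ is legitimate because the polar parts of an element of a von Neumann algebra remain in that algebra. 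Two steps you leave as bookkeeping deserve to be written out, though both are routine. First, the lower-corner identity $w^*(\projtwo{}^\perp\projone{}\projtwo{}^\perp)w=s^2$ requires extracting from $\projone{}^2=\projone{}$ the relation $(\projtwo{}^\perp\projone{}\projtwo{}^\perp)\,wcs=wcs(1-c^2)=ws^2(cs)$ and only then invoking density of the range of $cs$. Second, in (ii) abelianness of $N_\projtwo{}=\{c\}''$ is not by itself the whole statement: the identification $N=M_2(N_\projtwo{})$ needs both inclusions (one because $\projone{},\projtwo{}\in M_2(\{c\}'')$, the reverse by a commutant computation showing the matrix units together with $\projtwo{}\projone{}\projtwo{}=c^2e_{11}$ generate $M_2(\{c\}'')$), and the masa assertion --- which, as the relative commutant of $\projtwo{}N\projtwo{}$ in $N$ is the full diagonal, can only sensibly be read as maximal abelianness of the diagonal subalgebra of $M_2(N_\projtwo{})$ --- is a further one-line commutant check (an off-diagonal entry $y$ with $ay=yb$ for all $a,b\in N_\projtwo{}$ vanishes on taking $a=1$, $b=0$), not a formal consequence of abelianness. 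With those completions, your proposal is a full and correct proof by the classical route.
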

In this section we will often use the representation (\ref{e:p,q}) without further reference.
Notice that for projections in generic position, the equality $\|s\|=\|\projone{}-\projtwo{}\|$ (\cite[17.15 (8)]{Str-1981}, see also \cite[pg 391]{Dix-1948-RS} and \cite[pg 308]{Tak-2002-TOOAI}) follows also from the identity:
\begin{equation}
  \label{e:q-p}
  \projone{}-\projtwo{}=
  \begin{pmatrix}
    -s^2 & cs \\
    cs & s^2 \\
  \end{pmatrix}
  =
  \begin{pmatrix}
    -s & c \\
    c & s \\
  \end{pmatrix}
  \begin{pmatrix}
    s & 0 \\
    0 & s \\
  \end{pmatrix}
  =
  \begin{pmatrix}
    s & 0 \\
    0 & s \\
  \end{pmatrix}
  \begin{pmatrix}
    -s & c \\
    c & s \\
  \end{pmatrix}
\end{equation}
Notice also that if we set
\( v: = \frac{1}{\sqrt{2}}
\begin{pmatrix}
  (1-s)^{\nicefrac{1}{2}} & (1+s)^{\nicefrac{1}{2}}  \\
  (1+s)^{\nicefrac{1}{2}} & -(1-s)^{\nicefrac{1}{2}} \\
\end{pmatrix}
\), then $v=v^*$ is unitary and
\begin{equation}
  \label{e:diag q-p}
  \projone{}-\projtwo{}= v
  \begin{pmatrix}
    s & 0  \\
    0 & -s \\
  \end{pmatrix}
  v^*.
\end{equation}

It is well known that projections in generic position are unitarily equivalent in $N$, and over the years various authors (e.g., \cite{Kat-1966}, \cite{BL-2012-CJM}, \cite{Dix-1948-RS}, \cite{ASS-1994-JFA}) have constructed different unitaries in $N$ implementing the equivalence.
We will use the following unitary:
\begin{equation}
  \label{e: unit equiv}
  \projone{}=u\projtwo{}u^*\quad \text{for the unitary }u:=
  \begin{pmatrix}
    c & -s \\
    s & c  \\
  \end{pmatrix}.
\end{equation}

As shown by Amrein and Sinha in \cite{AS-1994-LAA}, reduction to generic position makes the analysis of Fredholm pairs simpler and more transparent.
For the convenience of the readers, we will provide here a short self-contained presentation of the main results on Fredholm pairs that we will need in the sequel, completing and generalizing results obtained in \cite{BL-2012-CJM}, \cite{ASS-1994-JFA}, \cite{AS-1994-LAA}, \cite{Arv-2007-PNASU}.
The starting point is the analysis of the case when the projections are in generic position.
Recall that Fredholm operators are characterized by being invertible modulo the compact operators.
\begin{lemma}
  \label{L:invert}
  Let $\projone{},\projtwo{}$ be projections in generic position.
  Then the following are equivalent.
  \begin{enumerate}
  \item $\projone{}\mid_{\projtwo{}\h}$ is a Fredholm operator
  \item $c$ is invertible
  \item $\projone{}\mid_{\projtwo{}\h}$ is invertible
  \item $\|\projone{}-\projtwo{}\|< 1$
  \item $\|\projone{}-\projtwo{}\|_{ess}< 1$.
  \end{enumerate}
  If these conditions are satisfied, then $\idx(\projone{}\mid_{\projtwo{}\h})=0$.
\end{lemma}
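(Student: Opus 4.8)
The plan is to run the cycle $(iii)\Rightarrow(i)\Rightarrow(ii)\Rightarrow(iii)$ and to attach the two remaining conditions via $(ii)\Leftrightarrow(iv)$ and $(iv)\Leftrightarrow(v)$, always working in the representation (\ref{e:p,q}) and using from \autoref{T:genericposition} that $c$ and $s$ are \emph{injective} positive contractions with $c^2+s^2=1$, that $c=(\projtwo{}\projone{}\projtwo{})^{\nicefrac{1}{2}}\mid_{\projtwo{}\h}$, and that $\norm{s}=\norm{\projone{}-\projtwo{}}$. The identity I will lean on repeatedly is that, for $\xi\in\projtwo{}\h$,
\[
  \norm{\projone{}\mid_{\projtwo{}\h}\xi}^2=\angles{\projtwo{}\projone{}\projtwo{}\xi,\xi}=\angles{c^2\xi,\xi},
\]
equivalently $\projtwo{}\mid_{\projone{}\h}\,\projone{}\mid_{\projtwo{}\h}=c^2$ on $\projtwo{}\h$; since $(\projone{}\mid_{\projtwo{}\h})^{*}=\projtwo{}\mid_{\projone{}\h}$, this converts every mapping property of $\projone{}\mid_{\projtwo{}\h}$ into a statement about the spectrum of $c^2$ near $0$.

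For $(ii)\Leftrightarrow(iv)$ I argue spectrally: as $c\ge0$ is injective, it is invertible iff bounded below iff $0\notin\sigma(c)$, and since $c^2=1-s^2$ this is exactly $\norm{s}<1$, i.e. $\norm{\projone{}-\projtwo{}}<1$. The implication $(iv)\Rightarrow(v)$ is trivial because $\norm{\cdot}_{ess}\le\norm{\cdot}$. For $(v)\Rightarrow(ii)$ I prove the contrapositive: if $c$ is not invertible then $\norm{s}=1$, so $1\in\sigma(s)$; injectivity of $c$ forbids $1$ from being an eigenvalue of $s$ (an eigenvector $\xi$ would give $c^2\xi=(1-s^2)\xi=0$), and a spectral value of a self-adjoint operator that is not an isolated finite-multiplicity eigenvalue belongs to the essential spectrum, so $1\in\sigma_{ess}(s)$. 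Hence $\norm{\projone{}-\projtwo{}}_{ess}=\norm{s}_{ess}=1$ by the unitary diagonalization (\ref{e:diag q-p}), contradicting $(v)$.

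The Fredholm links are where generic position does the real work, and I expect $(i)\Rightarrow(ii)$ to be the main obstacle. There, $\projone{}\mid_{\projtwo{}\h}$ Fredholm forces its adjoint $\projtwo{}\mid_{\projone{}\h}$, and hence the product $c^2=\projtwo{}\mid_{\projone{}\h}\,\projone{}\mid_{\projtwo{}\h}$, to be Fredholm; a self-adjoint Fredholm operator has $0\notin\sigma_{ess}$, and injectivity of $c$ excludes $0$ as an isolated finite-multiplicity eigenvalue, so $c^2$, and therefore $c$, is invertible. The reverse $(ii)\Rightarrow(iii)$ is softer: invertibility of $c$ yields $\angles{c^2\xi,\xi}\ge\delta\norm{\xi}^2$ for some $\delta>0$, so $\projone{}\mid_{\projtwo{}\h}$ is bounded below with closed range $\projone{}\projtwo{}\h$, and it is onto $\projone{}\h$ because any $\eta\in\projone{}\h$ orthogonal to $\projone{}\projtwo{}\h$ satisfies $\projtwo{}\eta=0$, placing $\eta\in\projone{}\h\cap\projtwo{}^\perp\h=\{0\}$ by generic position ($\projone{}\wedge\projtwo{}^\perp=0$). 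Since $(iii)\Rightarrow(i)$ is immediate, all five conditions are equivalent, and when they hold $(iii)$ presents $\projone{}\mid_{\projtwo{}\h}$ as invertible, so $\idx(\projone{}\mid_{\projtwo{}\h})=0$. The recurring delicate point is precisely the passage from ``Fredholm'' or ``small essential norm'' to ``invertible'': both would fail without the injectivity of $c$ and $s$, which is exactly the information encoded in the generic-position hypothesis.
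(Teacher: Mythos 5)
Your proof is correct, and its skeleton is the same as the paper's: everything is referred to the representation (\ref{e:p,q}), the three operator-theoretic conditions are linked in a cycle, and (iv), (v) are attached by spectral arguments in which the injectivity of $c$ (the content of generic position) converts spectral information at the endpoints into invertibility. Where you differ is in the execution of two implications. For (i)$\Rightarrow$(ii) the paper uses the polar decomposition, $\abs{\projone{}\mid_{\projtwo{}\h}}=c$: Fredholmness gives closed range, injectivity of $c$ gives injectivity, and the Inverse Mapping Theorem then makes $\projone{}\mid_{\projtwo{}\h}$ bounded below, forcing $c$ invertible. You instead observe that $c^{2}=\projtwo{}\mid_{\projone{}\h}\,\projone{}\mid_{\projtwo{}\h}$ is a product of Fredholm operators, hence Fredholm, and that an injective self-adjoint Fredholm operator is invertible because $0$ lies neither in the essential spectrum nor among the eigenvalues; this Weyl-type argument is then reused essentially verbatim in your (v)$\Rightarrow$(ii), which is a nice economy. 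For (ii)$\Rightarrow$(iii) the paper is more constructive: $(1,c^{-1}s)$ is exhibited as an explicit inverse of $\begin{pmatrix}c^{2}\\ cs\end{pmatrix}$, and that computation delivers surjectivity for free; you get surjectivity abstractly from bounded below (closed range) plus $\projone{}\wedge\projtwo{}^{\perp}=0$ (dense range), which has the merit of displaying exactly where the generic-position hypothesis enters. Finally, your (v)$\Rightarrow$(ii) rests on the identity $\norm{\projone{}-\projtwo{}}_{ess}=\norm{s}_{ess}$ from (\ref{e:diag q-p}) and then uses injectivity of $c$ to rule out $1$ being an eigenvalue of $s$; this is the careful version of the paper's (iv)$\Leftrightarrow$(v), whose line ``$\norm{\projone{}-\projtwo{}}_{ess}=\norm{s}$'' states the conclusion of that eigenvalue argument rather than its starting point, so your rendering of this step is the cleaner one.
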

\begin{proof}

(i)$~\Rightarrow~$(ii).
Since $|\projone{}\mid_{\projtwo{}\h}|=c$ is injective, and since $\projone{}\mid_{\projtwo{}\h}$ has closed range, by the Inverse Mapping Theorem $\projone{}\mid_{\projtwo{}\h}$ is invertible and hence so is $c$.\\
(ii)$~\Rightarrow~$(iii).
Immediate since $\projone{}\mid_{\projtwo{}\h}= \begin{pmatrix}c^2\\cs\end{pmatrix} $ and $(1, c^{-1}s)= \begin{pmatrix}c^2\\cs\end{pmatrix}^{-1}$.\\
(iii) $~\Rightarrow~$ (i).
Obvious.\\
(ii)$~\Leftrightarrow~$(iv).
Immediate since $\|\projone{}-\projtwo{}\|=\|s\|< 1$ and $c$ is invertible iff $c^2\ge \delta 1$ for some $\delta>0$ iff $s^2\le (1-\delta)1$ for some $\delta>0$.
\\
(iv)$~\Leftrightarrow~$(v).
By (\ref{e:diag q-p}), $\|\projone{}-\projtwo{}\|_{ess}= \|s\|$ and since $s$ is positive and injective this implies that $\|\projone{}-\projtwo{}\|=\|s\|<1$.
The other direction is trivial.
\end{proof}
The equivalence of (i) and (iv) and the fact that then the index is zero were obtained in \cite[Proposition 4]  {AS-1994-LAA}.
Using this lemma it is now easy to obtain a characterization of Fredholm pairs also when the projections are not in generic position.
\begin{proposition}
  \label{P: charact Fredh pairs}
  Let $\projone{}, \projtwo{}$ be projections in $\B(\h)$.
  Then $(\projone{},\projtwo{})$ is a Fredholm pair if and only if $\|\projone{}-\projtwo{}\|_{ess}<1$ and then $\projone{}\wedge \projtwo{}^\perp$ and $\projone{}^{\perp} \wedge \projtwo{}$ are finite and
  \begin{equation*}
    [\projone{}:\projtwo{}]= \tr(\projone{}\wedge \projtwo{}^\perp)-\tr(\projone{}^{\perp} \wedge \projtwo{}).
  \end{equation*}
\end{proposition}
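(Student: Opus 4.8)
The plan is to reduce to the generic position and then quote \autoref{L:invert}. First I would decompose $\h$ as the orthogonal direct sum of the five subspaces that reduce the pair $(\projone{},\projtwo{})$, namely the ranges of $\projone{}\wedge\projtwo{}$, $\projone{}\wedge\projtwo{}^\perp$, $\projone{}^\perp\wedge\projtwo{}$, $\projone{}^\perp\wedge\projtwo{}^\perp$, and the ``generic part'' $\h_0:=(\projone{}_0\vee\projtwo{}_0)\h$, where $\projone{}_0,\projtwo{}_0$ are the subprojections of \autoref{T:genericposition}(i), which are in generic position. (That these five subspaces are mutually orthogonal and exhaust $\h$ is the classical two-projections theorem underlying \autoref{T:genericposition}.) With respect to this decomposition both projections are block diagonal, $\projone{}=\projone{}_0\oplus 1\oplus 1\oplus 0\oplus 0$ and $\projtwo{}=\projtwo{}_0\oplus 1\oplus 0\oplus 1\oplus 0$, so that
\[
  \projone{}-\projtwo{}=(\projone{}_0-\projtwo{}_0)\oplus 0\oplus 1\oplus(-1)\oplus 0,
\]
where the summands $1$ and $-1$ act on the ranges of $\projone{}\wedge\projtwo{}^\perp$ and $\projone{}^\perp\wedge\projtwo{}$ respectively.

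Next I would read off the essential norm. An operator reduced by finitely many orthogonal subspaces has essential norm equal to the largest of the essential norms of its restrictions, and the identity on a subspace has essential norm $1$ or $0$ according to whether the subspace is infinite or finite dimensional. Hence I would conclude that $\|\projone{}-\projtwo{}\|_{ess}<1$ holds if and only if $\|\projone{}_0-\projtwo{}_0\|_{ess}<1$ and both $\projone{}\wedge\projtwo{}^\perp$ and $\projone{}^\perp\wedge\projtwo{}$ are finite: if either corner were infinite dimensional the corresponding $\pm 1$ summand would be a non-compact projection forcing $\|\projone{}-\projtwo{}\|_{ess}\ge 1$, while if both are finite they are finite rank and hence negligible modulo $\K$.

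Then I would analyze $\projtwo{}\mid_{\projone{}\h}$ directly. Relative to the induced decompositions $\projone{}\h=\projone{}_0\h_0\oplus(\projone{}\wedge\projtwo{})\h\oplus(\projone{}\wedge\projtwo{}^\perp)\h$ and $\projtwo{}\h=\projtwo{}_0\h_0\oplus(\projone{}\wedge\projtwo{})\h\oplus(\projone{}^\perp\wedge\projtwo{})\h$ it is block diagonal, with diagonal blocks $\projtwo{}_0\mid_{\projone{}_0\h_0}$, the identity of $(\projone{}\wedge\projtwo{})\h$, and the zero operator from $(\projone{}\wedge\projtwo{}^\perp)\h$ to $(\projone{}^\perp\wedge\projtwo{})\h$. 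A block-diagonal operator with finitely many blocks is Fredholm exactly when every block is, so $(\projone{},\projtwo{})$ is a Fredholm pair if and only if $\projtwo{}_0\mid_{\projone{}_0\h_0}$ is Fredholm and both corners are finite. Since $\projtwo{}_0\mid_{\projone{}_0\h_0}=(\projone{}_0\mid_{\projtwo{}_0\h_0})^*$, \autoref{L:invert} applied in generic position shows the former is equivalent to $\|\projone{}_0-\projtwo{}_0\|_{ess}<1$, and that in this case the block is in fact invertible. Comparing with the previous paragraph gives the asserted equivalence $(\projone{},\projtwo{})$ Fredholm $\Leftrightarrow\|\projone{}-\projtwo{}\|_{ess}<1$, together with the finiteness of $\projone{}\wedge\projtwo{}^\perp$ and $\projone{}^\perp\wedge\projtwo{}$.

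Finally I would add the indices over the blocks: the invertible generic block and the identity block each contribute $0$, while the zero operator from $(\projone{}\wedge\projtwo{}^\perp)\h$ to $(\projone{}^\perp\wedge\projtwo{})\h$ has kernel and cokernel equal to these finite-dimensional corners and hence index $\tr(\projone{}\wedge\projtwo{}^\perp)-\tr(\projone{}^\perp\wedge\projtwo{})$. Therefore $[\projone{}:\projtwo{}]=\idx(\projtwo{}\mid_{\projone{}\h})=\tr(\projone{}\wedge\projtwo{}^\perp)-\tr(\projone{}^\perp\wedge\projtwo{})$. Everything here is routine once the block-diagonal picture is in place and \autoref{L:invert} is available; the step that needs the most care is the essential-norm bookkeeping of the second paragraph (in particular that an infinite-dimensional off-diagonal corner forces the essential norm up to $1$), so I expect setting up the five-fold orthogonal reduction cleanly---and correctly matching kernel versus cokernel to the two corners---to be the main point to get right.
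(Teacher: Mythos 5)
Your proof is correct and follows essentially the same route as the paper's: decompose into the generic pair $(\projone{}_0,\projtwo{}_0)$ plus the corners $\projone{}\wedge\projtwo{}^\perp$ and $\projone{}^\perp\wedge\projtwo{}$, apply \autoref{L:invert} to the generic part, and add the indices. The only difference is that where the paper cites \cite[Theorem 2]{AS-1994-LAA} for the splitting of the Fredholm property and of the index over the two blocks, you verify it directly from the block-diagonal form of $\projtwo{}\mid_{\projone{}\h}$, which also makes explicit the essential-norm bookkeeping that the paper leaves implicit in its final sentence.
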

\begin{proof}

In the notations of \autoref{T:genericposition}  and (\ref{e:q-p}) and (\ref{e:diag q-p}) we have
\begin{equation*}
  \projone{}= (\projone{}\wedge \projtwo{} + \projone{}\wedge \projtwo{}^\perp )  \oplus \projone{}_0\text{ and   }
  \projtwo{}= (\projone{}\wedge \projtwo{}+ \projone{}^{\perp} \wedge \projtwo{}) \oplus \projtwo{}_0
\end{equation*}
and hence
\begin{equation}
  \label{e:q-p Perp}
  \projone{}-\projtwo{}= (\projone{}\wedge \projtwo{}^\perp- \projone{}^{\perp} \wedge \projtwo{})  \oplus (\projone{}_0-\projtwo{}_0).
\end{equation}

As in \cite[Theorem 2] {AS-1994-LAA}, it is easy to see that $(\projone{},\projtwo{})$ is a Fredholm pair if and only if both $\big(\projone{}\wedge \projtwo{}+  \projone{}\wedge \projtwo{}^\perp, \projone{}\wedge \projtwo{}+\projone{}^{\perp}\wedge \projtwo{}\big)$ and $(\projone{}_0,\projtwo{}_0)$ are Fredholm pairs, and then the index of $(\projone{},\projtwo{})$ is the sum of the indices of the other two pairs.
It is obvious that the first pair is Fredholm if and only if both $\projone{} \wedge \projtwo{}^{\perp}$ and $\projone{}^{\perp}\wedge \projtwo{}$ are finite and then its index is $\tr(\projone{} \wedge \projtwo{}^{\perp})-\tr(\projone{}^{\perp}\wedge \projtwo{})$.
By \autoref{L:invert}, the pair $(\projone{}_0,\projtwo{}_0)$ is Fredholm if and only if $\|\projone{}_0-\projtwo{}_0\|_{ess}< 1$ and then its index is zero.
Thus to conclude the proof one just notices that $\projone{}-\projtwo{}$ is a finite rank perturbation of $\projone{}_0-\projtwo{}_0$ and hence it has the same essential norm.
\end{proof}

The implication that if $(\projone{},\projtwo{})$ is a Fredholm pair then $\projone{} \wedge \projtwo{}^{\perp}$ and $\projone{}^\perp \wedge \projtwo{}$ are finite and the formula for the index of the pair was obtained in \cite[Theorem 2] {AS-1994-LAA}.
The necessity and sufficiency of the condition $\|\projone{}-\projtwo{}\|_{ess}<1$ (albeit not expressed in terms of essential norm) was only implicit in \cite{ASS-1994-JFA}, and was obtained explicitly and with more generality in \cite{BCP+-2006-Agatoeo}.

We consider now the cases when the difference $\projone{}-\projtwo{}$ belongs to some proper operator ideal $\mathcal J$.

\begin{proposition}
  \label{P:in J}
  Let $\mathcal J$ be an operator ideal and $\projone{},\projtwo{} \in B(\h)$ be projections.
  Then
  \begin{enumerate}
  \item $\projone{}-\projtwo{} \in \mathcal J$ if and only if $\projone{}^\perp \wedge \projtwo{}$ and $\projone{} \wedge \projtwo{}^\perp$ are finite and $s \in \mathcal J$ (where
    \( \projone{}_0:= \projone{}- \projone{}\wedge \projtwo{}- \projone{} \wedge \projtwo{}^\perp =
    \begin{pmatrix}
      c^2 & cs \\
      cs & s^2 \\
    \end{pmatrix}
    \) as by (\ref{e:p,q})).
  \item If $\projone{}-\projtwo{}\in \mathcal J$, then $[\projone{}:\projtwo{}]=0$ if and only if there is a unitary $u\in 1+\mathcal J$ such that $u\projtwo{}u^*=\projone{}$.
  \end{enumerate}
\end{proposition}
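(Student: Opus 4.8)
The plan is to reduce both parts to the generic-position decomposition of \autoref{T:genericposition} together with the block identities (\ref{e:q-p Perp}), (\ref{e:diag q-p}), and (\ref{e: unit equiv}), and then to argue separately on the ``trivial'' corners $\projone{}\wedge\projtwo{}^\perp$, $\projone{}^\perp\wedge\projtwo{}$ and on the generic part $(\projone{}_0,\projtwo{}_0)$. For (i) I would begin from the orthogonal splitting (\ref{e:q-p Perp}), $\projone{}-\projtwo{}=(\projone{}\wedge\projtwo{}^\perp-\projone{}^\perp\wedge\projtwo{})\oplus(\projone{}_0-\projtwo{}_0)$. Since $\mathcal J$ is a proper ideal, $\mathcal F\subseteq\mathcal J\subseteq\K$, and membership in $\mathcal J$ is stable under compression, so a direct sum lies in $\mathcal J$ iff each summand does. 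The first summand is a difference of projections onto mutually orthogonal subspaces; compressing by each of $\projone{}\wedge\projtwo{}^\perp$ and $\projone{}^\perp\wedge\projtwo{}$ shows it lies in $\mathcal J$ iff both those projections do, and a projection lies in the proper ideal $\mathcal J$ iff it is finite. For the second summand I would invoke (\ref{e:diag q-p}), $\projone{}_0-\projtwo{}_0=v\,\diag(s,-s)\,v^*$ with $v$ unitary, so that $\projone{}_0-\projtwo{}_0\in\mathcal J$ iff $\diag(s,-s)\in\mathcal J$ iff $s\in\mathcal J$. Combining these gives the stated equivalence.

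For the easier implication of (ii), suppose a unitary $u\in 1+\mathcal J$ with $u\projtwo{}u^*=\projone{}$ is given. Since $u\projtwo{}\h=\projone{}\h$, the restriction $u\mid_{\projtwo{}\h}\colon\projtwo{}\h\to\projone{}\h$ is unitary, and composing it with the Fredholm operator $\projtwo{}\mid_{\projone{}\h}\colon\projone{}\h\to\projtwo{}\h$ yields the operator $\xi\mapsto\projtwo{}u\xi$ on $\projtwo{}\h$, which equals $1_{\projtwo{}\h}+\projtwo{}(u-1)\projtwo{}\mid_{\projtwo{}\h}$. As $u-1\in\mathcal J\subseteq\K$, this is a compact perturbation of the identity on $\projtwo{}\h$ and so has index $0$; by additivity of the index under composition and the fact that $u\mid_{\projtwo{}\h}$ is unitary, I conclude $[\projone{}:\projtwo{}]=\idx(\projtwo{}\mid_{\projone{}\h})=0$.

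The main work, and the step I expect to be the obstacle, is the converse implication of (ii). Here $\projone{}-\projtwo{}\in\mathcal J\subseteq\K$ forces $\|\projone{}-\projtwo{}\|_{ess}=0<1$, so by \autoref{P: charact Fredh pairs} the pair is Fredholm with $[\projone{}:\projtwo{}]=\tr(\projone{}\wedge\projtwo{}^\perp)-\tr(\projone{}^\perp\wedge\projtwo{})$, both corners being finite by (i). Thus $[\projone{}:\projtwo{}]=0$ means exactly $\dim(\projone{}\wedge\projtwo{}^\perp)=\dim(\projone{}^\perp\wedge\projtwo{})<\infty$, which is precisely what lets me pick a unitary $w$ on $(\projone{}\wedge\projtwo{}^\perp)\h\oplus(\projone{}^\perp\wedge\projtwo{})\h$ swapping these two equidimensional subspaces, so that $w\projtwo{}w^*=\projone{}$ there. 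On the generic part I would take the unitary $u_0$ of (\ref{e: unit equiv}) applied to $(\projone{}_0,\projtwo{}_0)$, giving $u_0\projtwo{}_0u_0^*=\projone{}_0$; the point to check is $u_0\in 1+\mathcal J$, which reduces to $c-1\in\mathcal J$, and this follows from $s\in\mathcal J$ via the functional-calculus identity $c-1=(1-s^2)^{\nicefrac{1}{2}}-1=-s^2\big((1-s^2)^{\nicefrac{1}{2}}+1\big)^{-1}$ and the ideal property.

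Finally I would assemble $u:=1\oplus w\oplus 1\oplus u_0$ across the five-fold orthogonal decomposition of $\h$ into $(\projone{}\wedge\projtwo{})\h$, $(\projone{}\wedge\projtwo{}^\perp)\h$, $(\projone{}^\perp\wedge\projtwo{})\h$, $(\projone{}^\perp\wedge\projtwo{}^\perp)\h$, and the generic part. The delicate bookkeeping is twofold: first, that $u-1$ lands in $\mathcal J$ on every block (it is $0$ on the two agreeing corners $\projone{}\wedge\projtwo{}$ and $\projone{}^\perp\wedge\projtwo{}^\perp$, finite rank on the piece carrying $w$, and $u_0-1\in\mathcal J$ on the generic part); and second, that $u\projtwo{}u^*=\projone{}$ holds blockwise, namely $\projtwo{}=\projone{}$ already on the two agreeing corners, $w\projtwo{}w^*=\projone{}$ on the finite piece, and $u_0\projtwo{}_0u_0^*=\projone{}_0$ on the generic part. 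This produces the desired unitary $u\in 1+\mathcal J$ with $u\projtwo{}u^*=\projone{}$.
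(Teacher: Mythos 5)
Your proposal is correct and takes essentially the same route as the paper: part (i) via the splitting (\ref{e:q-p Perp}) together with (\ref{e:diag q-p}), and the converse of (ii) by combining a finite-rank unitary swapping the equidimensional corners $\projone{}\wedge\projtwo{}^\perp\sim\projone{}^\perp\wedge\projtwo{}$ with the generic-position unitary of (\ref{e: unit equiv}), whose membership in $1+\mathcal J$ rests on the same identity $c-1=-s^2(c+1)^{-1}\in\mathcal J^2\subset\mathcal J$. The only cosmetic difference is in the forward direction of (ii), which you verify by a direct index computation (composing $\projtwo{}\mid_{\projone{}\h}$ with the unitary $u\mid_{\projtwo{}\h}$ to get a compact perturbation of the identity), whereas the paper dismisses this direction as immediate.
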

\begin{proof}
  (i). By (\ref{e:q-p Perp}) we see that $\projone{}-\projtwo{}\in \mathcal J$ if and only if $\projone{} \wedge \projtwo{}^\perp \in \mathcal J$, $\projone{}^\perp \wedge \projtwo{}\in \mathcal J$ and $\projone{}_0-\projtwo{}_0\in \mathcal J$.
  By (\ref{e:diag q-p}), the last condition holds if and only if $s \in \mathcal J$, and the conclusion then follows from the fact that a projection belongs to a proper ideal if and only if it is finite.

  (ii).  First notice that from part (i) and \autoref{P: charact Fredh pairs} it follows that
  \begin{equation}
    \label{e:interm implic}
    \projone{}-\projtwo{}\in \mathcal J \text{ and } [\projone{}:\projtwo{}]=0~\Leftrightarrow~ \tr (\projone{} \wedge \projtwo{}^\perp) = \tr (\projone{}^\perp \wedge \projtwo{})< \infty\text{ and }s\in \mathcal J.
  \end{equation}

  Therefore if there is a unitary in $1+ \mathcal J$ such that $u\projtwo{}u^*=\projone{}$ , then it is immediate to see that $\projone{}-\projtwo{}\in \mathcal J$ and $[\projone{}:\projtwo{}]=0$.

  Conversely, assume that $\projone{}-\projtwo{}\in \mathcal J$ and $[\projone{}:\projtwo{}]=0$.
  Then by (\ref{e:interm implic}) $s\in \mathcal J$, $\projone{}^\perp \wedge \projtwo{} + \projone{} \wedge \projtwo{}^\perp$ is finite, and $\tr (\projone{} \wedge \projtwo{}^\perp) = \tr (\projone{}^\perp \wedge \projtwo{})$ and hence $\projone{} \wedge \projtwo{}^\perp \sim \projone{}^\perp \wedge \projtwo{}$.
  Then there is a unitary $u_1$ on $\big(\projone{} \wedge \projtwo{}^\perp + \projone{}^\perp \wedge \projtwo{})\h$ such that $u_1(\projone{}^\perp \wedge \projtwo{}) u_1^*= \projone{} \wedge \projtwo{}^\perp$.

  Let
  $u_2:=\begin{pmatrix}c&-s\\s&c\end{pmatrix}$.
  Then by (\ref{e: unit equiv}), $u_2$ is a unitary on $(\projone{}_0\vee \projtwo{}_0)\h)$ and $\projone{}_0=u_2\projtwo{}_0u_2^*$.
  Furthermore,
  \begin{equation*}
    u_2-1\mid_{(\projone{}_0\vee \projtwo{}_0)\h} = \begin{pmatrix}c-1&-s\\s&c-1\end{pmatrix}\in \mathcal J
  \end{equation*}
  because
  \begin{equation*}
    c-1= -s^2\big(c+1\big)^{-1}\in \mathcal J^2\subset \mathcal J.
  \end{equation*}
  Let $u_3$ be the identity on $\big(\projone{} \wedge \projtwo{}^\perp + \projone{}^\perp \wedge \projtwo{} + \projone{}_0\vee \projtwo{}_0\big)^\perp$.
  Then $u:= u_1\oplus u_2\oplus u_3$ is unitary and $u\projtwo{}u^*=\projone{}$.
  Since $u_1$ has finite rank, we conclude that $u-1\in \mathcal J$.
\end{proof}
Property (ii) in the above proposition was obtained for $\mathcal J= \K$ in \cite[Theorem 2.7]{BL-2012-CJM}.

Using \autoref{P:in J} (i) we obtain an independent proof of the first part of \autoref{T:Arveson}  which extends it to arbitrary proper operator ideals, and establishes the sufficiency of the conditions listed.

\begin{proposition}
  \label{P:corners}
  Let $\mathcal J$ be a proper ideal of $\B(\h)$ and let $\projone{},\projtwo{}$ be projections in $\B(\h)$.
  Then the following are equivalent:
  \begin{enumerate}
  \item $\projone{}-\projtwo{}\in \mathcal J$
  \item $\projtwo{}(\projone{}-\projtwo{})\projtwo{}\in \mathcal J^2$ and $\projtwo{}^\perp(\projone{}-\projtwo{})\projtwo{}^\perp\in \mathcal J^2$
  \item The projections $\projone{} \wedge \projtwo{}^\perp$ and $\projone{}^\perp \wedge \projtwo{}$ are finite and at least one the conditions $\projtwo{}(\projone{}-\projtwo{})\projtwo{}\in \mathcal J^2$ and $\projtwo{}^\perp(\projone{}-\projtwo{})\projtwo{}^\perp\in \mathcal J^2$ holds.
  \end{enumerate}
  Furthermore if $\projone{}-\projtwo{}\in \mathscr L^2$, then
  \begin{equation*}
    [\projone{}:\projtwo{}]= \tr( \projtwo{}(\projone{}-\projtwo{})\projtwo{}+\projtwo{}^\perp(\projone{}-\projtwo{})\projtwo{}^\perp).
  \end{equation*}
\end{proposition}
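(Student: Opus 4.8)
The plan is to reduce all three equivalences to two elementary operator identities together with a single fact about the square of an ideal. Setting $x:=\projone{}^\perp\projtwo{}=-\projone{}^\perp(\projone{}-\projtwo{})$ and $y:=\projone{}\projtwo{}^\perp=\projone{}(\projone{}-\projtwo{})$, I would first record
\begin{equation*}
  \projtwo{}(\projone{}-\projtwo{})\projtwo{}=-x^*x,\qquad \projtwo{}^\perp(\projone{}-\projtwo{})\projtwo{}^\perp=y^*y,\qquad \projone{}-\projtwo{}=y-x,
\end{equation*}
each of which follows in one line from $\projone{}^2=\projone{}$ and $\projtwo{}^2=\projtwo{}$ (for instance $\projtwo{}(\projone{}-\projtwo{})\projtwo{}=-\projtwo{}\projone{}^\perp\projtwo{}=-(\projone{}^\perp\projtwo{})^*(\projone{}^\perp\projtwo{})$). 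The second ingredient is the fact that for $T\ge 0$ one has $T\in\mathcal J^2$ if and only if $T^{\nicefrac{1}{2}}\in\mathcal J$. Granting this and that ideals are selfadjoint, one gets $\projtwo{}(\projone{}-\projtwo{})\projtwo{}\in\mathcal J^2\Leftrightarrow x^*x\in\mathcal J^2\Leftrightarrow\abs{x}\in\mathcal J\Leftrightarrow x\in\mathcal J$, and similarly $\projtwo{}^\perp(\projone{}-\projtwo{})\projtwo{}^\perp\in\mathcal J^2\Leftrightarrow y\in\mathcal J$.

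I expect this last fact to be the main obstacle, so I would prove it first. The direction $T^{\nicefrac{1}{2}}\in\mathcal J\Rightarrow T=T^{\nicefrac{1}{2}}T^{\nicefrac{1}{2}}\in\mathcal J^2$ is immediate. For the converse I would use that every element of $\mathcal J^2$ is in fact a single product $AB$ with $A,B\in\mathcal J$ (one writes a finite sum $\sum_{i=1}^n A_iB_i$ as one product via the ampliation $\B(\h)\to M_n(\B(\h))$, under which $\mathcal J$ is stable), together with the submultiplicativity $\mu_{2n-1}(AB)\le\mu_n(A)\mu_n(B)$ of singular values. Combined with the arithmetic--geometric mean inequality this gives $\mu_{2n-1}(T)^{\nicefrac{1}{2}}\le\tfrac12(\mu_n(A)+\mu_n(B))$; since the right-hand side lies in the characteristic set $\Sigma(\mathcal J)$ of singular-value sequences, and $\Sigma(\mathcal J)$ is solid and invariant under the dilation $\mu\mapsto(\mu_{\lceil n/2\rceil})_n$, it follows that $\mu(T)^{\nicefrac{1}{2}}\in\Sigma(\mathcal J)$, i.e. $T^{\nicefrac{1}{2}}\in\mathcal J$.

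With this in hand the equivalences fall out quickly. For (i)$\Leftrightarrow$(ii): if $\projone{}-\projtwo{}\in\mathcal J$ then $x,y\in\mathcal J$, so both corners lie in $\mathcal J^2$; conversely if both corners lie in $\mathcal J^2$ then $x,y\in\mathcal J$ and $\projone{}-\projtwo{}=y-x\in\mathcal J$. For (ii)$\Rightarrow$(iii) I only need finiteness of the meets: with $r:=\projone{}^\perp\wedge\projtwo{}$ one computes $r x^*x r=r\projone{}^\perp r=r$, so $r=-r\big(\projtwo{}(\projone{}-\projtwo{})\projtwo{}\big)r\in\mathcal J^2$ is a projection in a proper ideal, hence finite, and symmetrically $\projone{}\wedge\projtwo{}^\perp$ is finite. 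For (iii)$\Rightarrow$(i), assume without loss of generality $\projtwo{}(\projone{}-\projtwo{})\projtwo{}\in\mathcal J^2$ with both meets finite. In the generic position of \autoref{T:genericposition} the operator $s^2$ is the compression of $x^*x=\projtwo{}\projone{}^\perp\projtwo{}$ to $\projtwo{}_0\h$, whence $s^2\in\mathcal J^2$ and the fact above gives $s\in\mathcal J$; since the meets are finite, \autoref{P:in J}(i) then yields $\projone{}-\projtwo{}\in\mathcal J$.

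Finally, for the index formula I take $\mathcal J=\mathcal L^2$, so $\mathcal J^2=\mathcal L^1$ and, by (ii), the sum $\projtwo{}(\projone{}-\projtwo{})\projtwo{}+\projtwo{}^\perp(\projone{}-\projtwo{})\projtwo{}^\perp$ is trace class. Decomposing $\projtwo{}\projone{}^\perp\projtwo{}$ and $\projtwo{}^\perp\projone{}\projtwo{}^\perp$ into the parts supported on $\projone{}^\perp\wedge\projtwo{}$, respectively $\projone{}\wedge\projtwo{}^\perp$, and on the generic summand (where each equals $s^2$), I would compute
\begin{equation*}
  \tr\big(\projtwo{}(\projone{}-\projtwo{})\projtwo{}+\projtwo{}^\perp(\projone{}-\projtwo{})\projtwo{}^\perp\big)
  =\big(\tr(\projone{}\wedge\projtwo{}^\perp)+\tr(s^2)\big)-\big(\tr(\projone{}^\perp\wedge\projtwo{})+\tr(s^2)\big),
\end{equation*}
so the two copies of $\tr(s^2)$ cancel and the expression equals $\tr(\projone{}\wedge\projtwo{}^\perp)-\tr(\projone{}^\perp\wedge\projtwo{})$. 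Since $\projone{}-\projtwo{}\in\mathcal L^2\subseteq\K$ forces $\norm{\projone{}-\projtwo{}}_{ess}=0<1$, the pair $(\projone{},\projtwo{})$ is Fredholm and \autoref{P: charact Fredh pairs} identifies this last difference with $[\projone{}:\projtwo{}]$, completing the proof.
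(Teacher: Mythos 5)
Your proof is correct, and while its skeleton overlaps with the paper's, it is organized along a genuinely different route in two of the three legs. The paper runs everything through generic position: it reads both corners off the decomposition (\ref{e:q-p Perp}), obtaining the explicit formula (\ref{e:corners}), from which the equivalence of (i)--(iii) follows via \autoref{P:in J}(i), and the trace identity follows by cancelling the two (finite) copies of $\tr(s^2)$ and invoking \autoref{P: charact Fredh pairs}. You instead prove (i)$\Leftrightarrow$(ii) with no structure theory at all, from the identities $\projtwo{}(\projone{}-\projtwo{})\projtwo{}=-x^*x$, $\projtwo{}^\perp(\projone{}-\projtwo{})\projtwo{}^\perp=y^*y$, $\projone{}-\projtwo{}=y-x$ with $x=\projone{}^\perp\projtwo{}$, $y=\projone{}\projtwo{}^\perp$; this is close in spirit to the remark the paper itself makes after its proof via (\ref{e:decomp}), but the paper does not use it in the proof proper. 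Your (ii)$\Rightarrow$(iii) step is also different and quite clean: with $r:=\projone{}^\perp\wedge\projtwo{}$ one has $r=-r\big(\projtwo{}(\projone{}-\projtwo{})\projtwo{}\big)r\in\mathcal J^2\subseteq\mathcal J$, a projection in a proper ideal, hence finite. Only for (iii)$\Rightarrow$(i) (generic position, $s^2$ as a compression, then \autoref{P:in J}(i)) and for the trace formula (cancellation of $\tr(s^2)$, then \autoref{P: charact Fredh pairs}) do you coincide with the paper. The most substantive difference is that you isolate and actually prove the ideal-theoretic fact on which both arguments turn: for $T\ge 0$, $T\in\mathcal J^2$ if and only if $T^{\nicefrac{1}{2}}\in\mathcal J$. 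The paper uses the nontrivial direction silently when it asserts that membership of a corner in $\mathcal J^2$ is equivalent to $s\in\mathcal J$ (the hidden implication being $s^2\in\mathcal J^2\Rightarrow s\in\mathcal J$). Your proof of this fact --- reduction of a finite sum $\sum A_iB_i$ to a single product by the row--column ampliation, the estimate $\mu_{2n-1}(AB)\le\mu_n(A)\mu_n(B)$, the arithmetic--geometric mean inequality, and solidity plus dilation-invariance of the characteristic set --- is the standard Calkin-correspondence argument and is sound. In short: the paper's proof is shorter and uniform, one decomposition doing all the work but with the key ideal arithmetic left implicit; yours makes (i)$\Leftrightarrow$(ii) elementary and self-contained, at the cost of developing the singular-value machinery explicitly.
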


\begin{proof}
  From (\ref{e:q-p Perp}) we see that
  \begin{equation}
    \label{e:corners}
    \projtwo{}(\projone{}-\projtwo{})\projtwo{}=-\projone{}^\perp \wedge \projtwo{} \oplus
    \begin{pmatrix}
      -s^2 & 0 \\
      0    & 0 \\
    \end{pmatrix}
    \text{ and } \projtwo{}^\perp(\projone{}-\projtwo{})\projtwo{}^\perp=\projone{} \wedge \projtwo{}^\perp \oplus
    \begin{pmatrix}
      0&0\\
      0&s^2\\
    \end{pmatrix}.
  \end{equation}
  Thus $\projtwo{}(\projone{}-\projtwo{})\projtwo{}\in \mathcal J^2$ (resp., $\projtwo{}^\perp(\projone{}-\projtwo{})\projtwo{}^\perp\in \mathcal J^2$) if and only if $\projone{}^\perp \wedge \projtwo{}$ is finite and $s\in \mathcal J$ (resp., $\projone{} \wedge \projtwo{}^\perp$ is finite and $s\in \mathcal J$).
  By \autoref{P:in J} (i) it is now obvious that the conditions (i)-(iii) are equivalent.

  Assume now that $\projone{}-\projtwo{}\in \mathscr L^2$, then by the implication (i) $\Rightarrow $ (ii)
  we see that
  $\projtwo{}(\projone{}-\projtwo{})\projtwo{}\in \mathcal  L^1$ and $\projtwo{}^\perp(\projone{}-\projtwo{})\projtwo{}^\perp\in \mathcal L^1$.
  Then by
  (\ref{e:corners})
  \begin{equation*}
    \tr( \projtwo{}(\projone{}-\projtwo{})\projtwo{}+\projtwo{}^\perp(\projone{}-\projtwo{})\projtwo{}^\perp)= \tr (\projone{} \wedge \projtwo{}^\perp) - \tr (\projone{}^\perp \wedge \projtwo{})
  \end{equation*}
  because
  \begin{equation*}
    \tr \begin{pmatrix}s^2&0\\0&0\end{pmatrix}= \tr \begin{pmatrix}0&0\\0&s^2\end{pmatrix}< \infty
  \end{equation*}
  as $s^2\in \mathscr L^1$.
  The conclusion then follows from \autoref{P: charact Fredh pairs}.
\end{proof}

The equivalence of just (i) and (ii) follows immediately from the identity
\begin{equation*}
  (\projone{}-\projtwo{})^2= - \projtwo{}(\projone{}-\projtwo{})\projtwo{} + \projtwo{}^\perp (\projone{}-\projtwo{})\projtwo{}^\perp
\end{equation*}
whence
\begin{equation}
  \label{e:decomp}
  ((\projone{}-\projtwo{})_{+})^2=\projtwo{}^{\perp}(\projone{}-\projtwo{})\projtwo{}^{\perp}\quad\text{and}\quad  ((\projone{}-\projtwo{})_{-})^2-=-\projtwo{}(\projone{}-\projtwo{})\projtwo{}.
\end{equation}

\begin{comment}
  The fact that $\tr(\projone{}-\projtwo{})\in \mathbb Z$ when $\projone{}-\projtwo{}\in \mathscr L^1$ was proven by Effros in \cite{Eff-1989-MI}.
  A generalization of that result to the case when $\projone{}-\projtwo{}\in \mathscr L^{2n+1}$ for some integer $n$ was given in \cite[Theorem 4.1] {ASS-1994-JFA} and a further generalization is:
  \begin{theorem}[\protect{\cite[Theorem 3.1]{CP-2004-KT}}]
    \label{T:CareyPhill} If $\projtwo{},\projone{}$ are projections with $\projone{}-\projtwo{}\in\K$ and $f$ is a continuous odd function $f:[-1,1]\to \mathbb(R)$ with $f(1) \ne 0$, and $f(\projone{}-\projtwo{})\in \mathcal L^1$, then
    \begin{equation*}
      \idx(\projtwo{},\projone{})= \frac{1}{f(1)}\tr(f(\projone{}-\projtwo{})).
    \end{equation*}
    In particular, if $\projone{}-\projtwo{}\in \mathcal L^r$ for some $1\le r< \infty$ then $\idx(\projtwo{},\projone{})=\tr\big ((\projone{}-\projtwo{})|\projone{}-\projtwo{}|^{r-1}\big)$.
    Thus if $\projone{}-\projtwo{}\in \mathcal L^{2n+1}$ for some $n\in\mathbb N$, then
    \begin{equation*}
      \idx(\projtwo{},\projone{})=\tr\big ((\projone{}-\projtwo{})^{2m+1}\big)\quad\text {for all integers }m\ge n.
    \end{equation*}
  \end{theorem}
  The proof is an immediate consequence of \autoref{P:corners}, (\ref{e:q-p}), and (\ref{e:diag q-p}).
  Notice that the above result was proven directly for general von Neumann algebras (see discussion of $\tau$-Breuer Fredholm operators and indices below).
\end{comment}

To conclude this survey, we observe that every Fredholm operator can be associated in a {\it natural} way to a Fredholm pair of projections $(\projone{},\projtwo{})$ so that the index of the operator equals the index of the pair.
To this end, consider any Fredholm operator $x : \h \to \mathcal{K}$ and scale $x$ to have norm 1.
After choosing an arbitrary infinite, co-infinite projection $\projtwo{}$ and identifying $\mathcal{K}$ with $\projtwo{}\h$, we have the following proposition.

\begin{proposition}
  \label{prop:digested-frame}
  Suppose $x : \h \to \projtwo{}\h$ is a contraction with $\projtwo{}$ an infinite, co-infinite projection.
  Then $x$ can be completed to an isometry $w: \h \to \h$ (i.e., $x = \projtwo{}w$), and for any such completion, if $x$ is Fredholm then $(\projone{} := ww^{*}, \projtwo{})$ is a Fredholm pair with $[\projone{}:\projtwo{}] = \idx x$.
\end{proposition}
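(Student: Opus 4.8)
The plan is to prove the two assertions separately: first the existence of an isometric completion, then, for an arbitrary such completion, the identification of the pair as Fredholm with the correct index.

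For the completion I would use the defect operator. Since $x$ is a contraction, $D := (1 - x^*x)^{\nicefrac{1}{2}}$ is a well-defined positive contraction on $\h$. Because $\projtwo{}$ is co-infinite, $\projtwo{}^\perp\h$ is a separable infinite-dimensional subspace, so there is an isometry $s \in \B(\h)$ whose range lies in $\projtwo{}^\perp\h$ (that is, $\projtwo{}^\perp s = s$ and $s^*s = 1$). Setting $w := x + sD$, I would verify that $w$ is an isometry: since $x$ has range in $\projtwo{}\h$ while $sD$ has range in $\projtwo{}^\perp\h$, the cross terms vanish and $w^*w = x^*x + D s^*s D = x^*x + D^2 = 1$. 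Moreover $\projtwo{}w = \projtwo{}x + \projtwo{}sD = x$, using $\projtwo{}x = x$ and $\projtwo{}s = 0$. This exhibits $x$ as $\projtwo{}w$.

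For the index identification, fix any isometry $w$ with $\projtwo{}w = x$, so that $\projone{} := ww^*$ is the range projection of $w$, and fix any isometry $v$ with $vv^* = \projtwo{}$. The key computation is the identity $v^*w = v^*x$: writing $w = \projtwo{}w + \projtwo{}^\perp w$ and using $v^*\projtwo{}^\perp = v^*(1 - vv^*) = 0$ together with $\projtwo{}w = x$ gives $v^*w = v^*\projtwo{}w = v^*x$. Crucially, this holds for every completion, which is exactly what the statement demands. Now $v^*\mid_{\projtwo{}\h} : \projtwo{}\h \to \h$ is a unitary, so $v^*x = (v^*\mid_{\projtwo{}\h})x$ is Fredholm precisely when $x$ is, and in that case $\idx(v^*x) = \idx x$, since composing with a unitary (an invertible operator of index $0$) leaves the index unchanged.

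Finally I would invoke the bookkeeping already assembled. By (\ref{e:equiv}), $\projtwo{}\mid_{\projone{}\h}$ is Fredholm if and only if $v^*w$ is, so the Fredholmness of $x$ makes $(\projone{}, \projtwo{})$ a Fredholm pair; and by (\ref{e: Fredh as compression}), $[\projone{}:\projtwo{}] = \idx(\projtwo{}\mid_{\projone{}\h}) = \idx(v^*w) = \idx(v^*x) = \idx x$. I do not expect any serious obstacle here: the one point that requires care is checking that the cross terms vanish, both in forming $w$ and in the identity $v^*w = v^*x$, i.e. keeping precise track of which operators have ranges in $\projtwo{}\h$ versus $\projtwo{}^\perp\h$. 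Everything else is a routine application of the index calculus encoded in (\ref{e: Fredh diff spaces}), (\ref{e: Fredh as compression}), and (\ref{e:equiv}).
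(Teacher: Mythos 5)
Your proposal is correct and follows essentially the same route as the paper: the same defect-operator construction $w = x + (\text{isometry into }\projtwo{}^{\perp}\h)\cdot(1-x^{*}x)^{\nicefrac{1}{2}}$ for the completion, and the same reduction of the index claim to the unitary-invariance facts (\ref{e: Fredh diff spaces}), (\ref{e: Fredh as compression}), (\ref{e:equiv}). The only cosmetic difference is that the paper factors $x = (\projtwo{}\mid_{\projone{}\h})w$ directly, whereas you route through an auxiliary isometry $v$ with $vv^{*} = \projtwo{}$ and the identity $v^{*}w = v^{*}x$; both are the same identification of $x$ with $\projtwo{}\mid_{\projone{}\h}$ up to composition with unitaries.
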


\begin{proof}
  Consider the defect operator $(1-x^{*}x)^{\nicefrac{1}{2}}$ and a partial isometry $v$ taking $R_{1-x^{*}x}$ to $vv^{*} \le \projtwo{}^{\perp}$.
  Then define $w = x + v(1-x^{*}x)^{\nicefrac{1}{2}}$.
  Since $v^{*}x = 0 = x^{*}v$, a simple computation shows $w^{*}w = 1$, which establishes that $x$ can be completed to an isometry.

  Now suppose $w$ is any such completion, and hence $x = \projtwo{}w$.
  Define $\projone{} := ww^{*}$ and notice $x = \projtwo{}\projone{}w = (\projtwo{}\mid_{\projone{}\h})w$ when the operators are viewed on the appropriate spaces.
  Then from (\ref{e: Fredh diff spaces}) and (\ref{e: Fredh as compression}), if $x$ is Fredholm, so is $\projtwo{}\mid_{\projone{}\h}$ and
  \begin{equation*}
    \idx x = \idx(\projtwo{}\mid_{\projone{}\h})= [\projone{}:\projtwo{}]. \qedhere
  \end{equation*}
\end{proof}

\begin{remark}
  We note that any completion of a contraction $x : \h \to \projtwo{}\h$ to an isometry arises in the manner above.
  Indeed, suppose $w'$ is such a completion.
  Set $y := w'-x$ and note that $\projtwo{}y = 0$ since $x = \projtwo{}w'$.
  Thus
  \begin{align*}
    y^{*}y & = (w')^{*}w' - (w')^{*}x - x^{*}w' + x^{*}x \\
           & = 1 - 2(w')^{*}\projtwo{}w' + x^{*}x = 1- x^{*}x.
  \end{align*}
  In particular, $y = v'(1-x^{*}x)^{\nicefrac{1}{2}}$ for some partial isometry $v'$ with $(v')^{*}v' = R_{1-x^{*}x}$ and $v'(v')^{*} = R_y \le \projtwo{}^{\perp}$.
  Moreover, $u = \projtwo{} + v(v')^{*}$ is a partial isometry for which $w = uw'$.

  Another perspective of \autoref{prop:digested-frame} is that $\projone{}$ is a dilation of $xx^{*}$ to $\h$ for which $\idx x = [\projone{}:\projtwo{}]$.
  Indeed, $\projone{} := ww^{*}$ is a dilation of $xx^{*}$ because if $y = w-x$, then with respect to the decomposition $\projtwo{}+\projtwo{}^{\perp}=1$
  \begin{equation*}
    ww^{*} =
    \begin{pmatrix}
      xx^{*} & xy^{*} \\
      yx^{*} & yy^{*} \\
    \end{pmatrix}.
  \end{equation*}
\end{remark}

\subsection{Breuer Fredholm}

As mentioned in the introduction, the essential codimension/relative index of projections has found its main application in the study of spectral flows in Fredholm modules.
However, in many cases of interest the Fredholm modules are with respect to a semifinite von Neumann algebra (see \cite{CPS-2003-AM}, \cite{CP-2004-KT}, \cite{BCP+-2006-Agatoeo},\cite{CPRS-2006-AM}).
The following short summary may be of interest to the reader.

Let $M$ be a semifinite von Neumann algebra with separable predual (but not necessarily a factor), $\tau$ a faithful semifinite normal trace, and let $\mathcal{J}_\tau(M)$ the ideal of $\tau$-compact operators
\begin{equation*}
  \mathcal{J}_\tau(M):=\overline{ \spans \{x\in \M_+\mid \tau(x)< \infty\}} \quad \text {(norm closure).}
\end{equation*}
Let $\pi: M\to M/ \mathcal{J}_\tau(M)$ be the canonical quotient map and let $\|x\|_{ess}:=\|\pi(x)\|$ be the essential norm.
Then and element $x\in M$ is called $\tau$-Breuer Fredholm (also called just $\tau$-Fredholm) if $\pi(x)$ is invertible.
A necessary and sufficient condition is that
$\tau(N_x)< \infty$ (where $N_x$ is the projection on the kernel of $x$) and that there exists a projection $e\in M$ with $\tau(e)< \infty$ such that $(1-e)\h \subset x\h$.
Then the index is defined as
\begin{equation*}
\idx(x)= \tau(N_x)-\tau(N_{x^*})\in \mathbb R
\end{equation*}
and satisfies the expected properties of an index, but of course it is no longer integer valued.

The original definition by Breuer was given in terms of the ideal $\mathcal{J} (M)$ of compact operators on $M$ which received considerable attention over the years,
\begin{equation*}
  \mathcal{J} (M):=\overline{ \spans \{x\in \M_+\mid R_x \text { is finite}\}} \quad \text {(norm closure).}
\end{equation*}
When $M$ is a factor and hence has a unique trace (up to normalization), the notions of $\tau$-Breuer--Fredholm and Breuer--Fredholm coincides, but for global algebras they do not and so their theory had to be partially re-derived in \cite{BCP+-2006-Agatoeo}.

With these definitions almost all of the results listed here for $\B(\h)$ hold with the same statements and mostly with the same proofs.
So we will briefly list here only the properties that fail or that require a different proof.

\autoref {P: charact Fredh pairs} holds with the same statements and a natural modification of the proof of \cite[Proposition 3.1]{ASS-1994-JFA} in the case that $M$ is a factor, but required more work for the general case \cite[Lemma~4.1]{BCP+-2006-Agatoeo} less the trace condition which is only relevant when $M$ is a factor.

It is still true that if $\projone{},\projtwo{}\in M$ are in generic position and form a Fredholm pair then $[\projone{}:\projtwo{}]=0$, but contrary to \autoref{L:invert}, we can have $\|\projone{}-\projtwo{}\|=1$ and $g$ and $c$ are only invertible modulo $\mathcal{J}_\tau(M)$ as the following example shows.

\begin{example}
  \label{E:cpt}
  Let $M$ be a type II$_\infty$ factor.
  Let $\projtwo{}\sim \projtwo{}^\perp\sim1$ be a projection in $M$ which we decompose into a sum $\sum _1^\infty \projtwo{}_n=\projtwo{}$ of mutually orthogonal finite projections such that $\tau\big(\sum _1^\infty \projtwo{}_{2n}\big)<\infty$.
  Let
  \begin{align*}
    c     & := \sum_{1}^\infty (\sqrt{\frac{1}{2n}}\projtwo{}_{2n}+  \sum_{0}^\infty \sqrt{1-\frac{1}{2n+1}}\projtwo{}_{2n+1} \\
    s     & := \sum_{1}^\infty \sqrt{1-\frac{1}{2n}}\projtwo{}_{2n}+  \sum_{0}^\infty \sqrt{\frac{1}{2n+1}}\projtwo{}_{2n+1}  \\
    \projtwo{}     & =
    \begin{pmatrix}
      1 & 0 \\
      0 & 0 \\
    \end{pmatrix}
    \quad\text{and}\quad
    \projone{} =
    \begin{pmatrix}
      c^2 & cs  \\
      cs  & s^2 \\
    \end{pmatrix}.
  \end{align*}
  Then $\projtwo{}$ and $\projone{}$ are in generic position and $\|\projone{}-\projtwo{}\|= \|s\|=1$ while $s\in \mathcal{J}(M)$, hence $\projone{}-\projtwo{}\in \mathcal{J}(M)$, and $(\projone{},\projtwo{})$ is a Fredholm pair with respect to $M$.
\end{example}

\autoref{P:in J} (ii) holds without any changes if $M$ is a factor, but does not hold for global algebras.
Consider for instance $M:=\B(\h_1)\oplus \B(\h_2)$ and $\tau:=\tr\oplus \tr$.
Let $\projtwo{}, \projone{}$ be rank one projections in $ \B(\h_1)$ and $ \B(\h_2)$ respectively.
Then $\projtwo{}$ and $\projone{}$ are not equivalent (with respect to $M$) and hence a fortiori they are not unitarily equivalent.
On the other hand $\projone{}-\projtwo{}=\in \mathcal{J}(M)=\mathcal K(\h_1)\oplus \mathcal K(\h_2)$, hence $\projone{},\projtwo{}$ is a Fredholm pair and furthermore $[\projone{}:\projtwo{}] = \tr(\projone{})-\tr(\projtwo{})=0$.

\section{The Kadison theorem and some applications}
We begin by using the tools developed in Section \S2  to identify the integer $a-b$ in Kadison's theorem, that is, to prove \autoref{T:ess codim}.
\begin{proof}
  By (\ref{e:a-b})
  \begin{equation*}
    a-b = \tr\big( \projtwo{}\projone{}\projtwo{}- \projtwo{} + \projtwo{}^\perp \projone{} \projtwo{}^\perp \big) = \tr\big(\projtwo{}(\projone{}-\projtwo{})\projtwo{}+ \projtwo{}^\perp (\projone{}-\projtwo{})\projtwo{}^\perp\big)
  \end{equation*}
  and by (\ref{e:ab}), $\projtwo{}(\projone{}-\projtwo{})\projtwo{}  \in \mathcal L^1$ and $\projtwo{}^\perp (\projone{}-\projtwo{}) \projtwo{}^\perp \in \mathcal L^1$.
  Thus by \autoref{P:corners}, $\projone{}-\projtwo{}\in \mathcal L^2$ and $[\projone{}:\projtwo{}]= \tr( \projtwo{}(\projone{}-\projtwo{})\projtwo{}+\projtwo{}^\perp(\projone{}-\projtwo{})\projtwo{}^\perp).$
\end{proof}

As a first consequence of Kadison's theorem and of the work in Section \S 2, we observe that if the diagonal of a projection $\projone{}$ clusters sufficiently fast around $0$ and $1$ (that is, if $a+b< \infty$, or, equivalently, if $\projone{}-\projtwo{} \in \mathcal L^2$), then one can ``read'' from the diagonal the essential codimension $[\projone{}:\projtwo{}]$.
But what if $a+b= \infty$?

If $a=\infty$ and $b<\infty$, from $\projtwo{}^\perp(\projone{}-\projtwo{})\projtwo{}^\perp \in \mathcal L^1$ we can deduce that $\projone{} \wedge \projtwo{}^\perp$ is finite and $s\in \mathcal L^2$, and hence from $\projtwo{}(\projone{}-\projtwo{})\projtwo{}\not \in \mathcal L^1$ it follows that $\projone{}^\perp \wedge \projtwo{}$ is infinite.
Similarly, if $a<\infty$ and $b=\infty$ then $\projone{} \wedge \projtwo{}^\perp$ is infinite.
In either case $(\projone{},\projtwo{})$ is not a Fredholm pair and in particular, $\projone{}-\projtwo{}\not \in \K$.

Less trivial is the case when $a=b=\infty$ and $\projone{}-\projtwo{}\in \K \setminus \mathcal L^2$, as we see from the following proposition.
We first need to introduce two notations.
Given an orthonormal basis $\{e_n\}$, denote by $E$ the conditional expectation on the algebra of diagonal operators, namely $E(x)$ is the diagonal of the operator $x\in\B(\h)$.
Next, given two sequences $\xi$ and $\eta$ of non-negative numbers converging to $0$, with $\xi^*$ and $\eta^*$ their monotone non-increasing rearrangements, we say that $\xi$ is majorized by $\eta$ ($\xi\prec \eta$) if $\sum_{j=1}^n\xi^*_j\le \sum_{j=1}^n\eta^*_j$ for all $n$.

\begin{proposition}
  \label{prop:q-p not hs}
  Suppose $\projone{},\projtwo{}$ are projections with $\projone{}-\projtwo{} \in \K \setminus \mathcal{L}^2$.
  Then there exists a projection $\projone{}'$ such that $\projone{}'-\projtwo{} \in \K$, $[\projone{}':\projtwo{}] \not= [\projone{}:\projtwo{}]$ and there is an orthonormal basis $\{e_n\}$ that diagonalizes $\projtwo{}$ such that $E(\projone{})=E(\projone{}')$.
\end{proposition}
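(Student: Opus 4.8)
The plan is to leave the "wedge part" of the essential codimension untouched and to change the index only on the generic part, where the divergence forced by $\projone{}-\projtwo{}\notin\mathcal L^2$ provides just enough room.

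I would first reduce to generic position. By \autoref{T:genericposition} and \autoref{P: charact Fredh pairs} we may write $\projone{}=(\projone{}\wedge\projtwo{}+\projone{}\wedge\projtwo{}^\perp)\oplus\projone{}_0$ and $\projtwo{}=(\projone{}\wedge\projtwo{}+\projone{}^\perp\wedge\projtwo{})\oplus\projtwo{}_0$, and the finite wedges contribute the fixed integer $\tr(\projone{}\wedge\projtwo{}^\perp)-\tr(\projone{}^\perp\wedge\projtwo{})$ to $[\projone{}:\projtwo{}]$. Keeping these blocks and their basis vectors unchanged for $\projone{}'$, it suffices to replace $\projone{}_0$ by a projection $\projone{}_0'$ with $\projone{}_0'-\projtwo{}_0\in\K$, the same diagonal, and $[\projone{}_0':\projtwo{}_0]=[\projone{}_0:\projtwo{}_0]+1=1$ (here $[\projone{}_0:\projtwo{}_0]=0$ by \autoref{L:invert}). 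By \autoref{P:in J}(i) the hypothesis $\projone{}-\projtwo{}\in\K\setminus\mathcal L^2$ forces $s\in\K\setminus\mathcal L^2$, so the eigenvalues of $s^2$ satisfy $s_k^2\to0$ and $\sum_k s_k^2=\infty$, and both $\projtwo{}_0\h$ and $\projtwo{}_0^\perp\h$ are infinite dimensional.

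The key device is a single index-carrying block built inside $\projtwo{}_0^\perp\h$. Since the compression $\projtwo{}_0^\perp\projone{}_0\projtwo{}_0^\perp=s^2$ on $\projtwo{}_0^\perp\h$ has trace $\sum_k s_k^2=\infty$, the map $W\mapsto\tr(s^2\mid_W)$ on $d$-dimensional subspaces is continuous, and by connectedness of the Grassmannian its range is an interval containing values below $1$ (tiny eigendirections) and, for $d$ with $\sum_{k\le d}s_k^2>1$, values above $1$; hence there is a $d$-dimensional $W_0\subseteq\projtwo{}_0^\perp\h$ with $\tr(s^2\mid_{W_0})=1$. Fixing an orthonormal basis $\{\zeta_i\}$ of $W_0$ and putting $\delta_i:=\langle s^2\zeta_i,\zeta_i\rangle$ (so $\sum_i\delta_i=1$), the unit vector $\eta:=\sum_i\sqrt{\delta_i}\,\zeta_i\in W_0\subseteq\projtwo{}_0^\perp\h$ gives a rank-one projection $\eta\otimes\eta$ whose diagonal on $\{\zeta_i\}$ is $(\delta_i)$, i.e.\ exactly the diagonal of $\projone{}_0$ there. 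Using $\eta\otimes\eta$ as the part of $\projone{}_0'$ on $W_0$ places $\eta\in\projone{}_0'\h\cap\projtwo{}_0^\perp\h$, contributing $+1$ to $\tr(\projone{}_0'\wedge\projtwo{}_0^\perp)$, hence to $[\projone{}_0':\projtwo{}_0]$. On the complement $\projtwo{}_0\h\oplus(\projtwo{}_0^\perp\h\ominus W_0)$ I would assemble $\projone{}_0'$ from finite \emph{balanced} blocks $U\oplus V$ with $U\subseteq\projtwo{}_0\h$ (where the diagonal of $\projone{}_0$ is that of $c^2$, near $1$) and $V$ in the remaining part of $\projtwo{}_0^\perp\h$ chosen, again by the intermediate value theorem, so that $\tr(s^2\mid_V)=\tr((1-c^2)\mid_U)$; then the block has integer trace $\tr(c^2\mid_U)+\tr(s^2\mid_V)=\dim U$, finite-dimensional Schur--Horn furnishes a rank-$(\dim U)$ projection with the prescribed diagonal, and since $\tr P-\tr Q=\dim(P\wedge Q^\perp)-\dim(P^\perp\wedge Q)$ in finite dimensions each such block has index $0$. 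Taking $U,V$ from the tails keeps $\lVert\projone{}_0'\mid_{\mathrm{block}}-\projtwo{}_0\mid_{\mathrm{block}}\rVert\to0$, so $\projone{}_0'-\projtwo{}_0\in\K$, while $E(\projone{}_0')=E(\projone{}_0)$ and $[\projone{}_0':\projtwo{}_0]=1$, giving $[\projone{}':\projtwo{}]=[\projone{}:\projtwo{}]+1\neq[\projone{}:\projtwo{}]$.

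The hard part is the bookkeeping that forces the balanced blocks both to tile the entire complement and to carry exactly integer diagonal sums while matching the prescribed diagonal: one must run a back-and-forth argument ensuring that every reference direction of $\projtwo{}_0\h$ and of $\projtwo{}_0^\perp\h\ominus W_0$ is eventually absorbed into some block (the intermediate-value choices produce non-eigendirections, so exhaustion is not automatic), and one must check the finite majorization inequalities underlying each Schur--Horn realization. This is precisely where $s_k^2\to0$ (fine granularity, so the intermediate-value choices land exactly and the blocks stay close to $\projtwo{}_0$) and $\sum_k s_k^2=\infty$ (an inexhaustible supply of $\projtwo{}_0^\perp$-mass to borrow) are used, and it is the same mechanism whose failure, when $\projone{}-\projtwo{}\in\mathcal L^2$, makes the index of the pair diagonal-determined.
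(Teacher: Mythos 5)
Your overall strategy coincides with the paper's: reduce to generic position (the wedges are finite by \autoref{P:in J}~(i) and can be carried along unchanged), observe $[\projone{}_0:\projtwo{}_0]=0$ via \autoref{L:invert}, and then shift the index by $+1$ by planting a rank-one projection under $\projtwo{}_0^\perp$, hiding it in the diagonal by exploiting $\tr(s^2)=\infty$. The difference is in how the hiding is accomplished. The paper forms $\tilde{\projone{}}=\projone{}_0\oplus r'$ with $r'$ rank one under $\projtwo{}^\perp$, so that $[\tilde{\projone{}}:\projtwo{}]=1$, and then settles the diagonal-matching problem in a single stroke by invoking the infinite-dimensional Schur--Horn theorem of Kaftal--Weiss: the diagonal $\xi$ of $s^2$ satisfies $\xi\prec\lambda(s^2)\prec\lambda(s^2\oplus 1)$ by \cite[Proposition 6.4]{KW-2010-JFA}, and since $\xi\notin\ell^1$ (precisely because $\projone{}-\projtwo{}\notin\mathcal L^2$) \cite[Proposition 6.6]{KW-2010-JFA} yields a unitary $u$ on $\projtwo{}^\perp\h$ with $E(u(s^2\oplus 1)u^*)=\diag\xi$; conjugating $\tilde{\projone{}}$ by $1\oplus u$ finishes the proof. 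You instead try to build the diagonal-preserving, index-shifting projection by hand: a rank-one $\eta\otimes\eta$ on an intermediate-value-chosen $W_0$ with $\tr(s^2\mid_{W_0})=1$, plus ``balanced'' finite blocks realized by finite-dimensional Schur--Horn.

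This is where there is a genuine gap, and you flag it yourself: the back-and-forth that must simultaneously (a) balance each block trace exactly (which forces Grassmannian choices in non-eigendirections), (b) exhaust both $\projtwo{}_0\h$ and $\projtwo{}_0^\perp\h\ominus W_0$, and (c) keep $\projone{}_0'-\projtwo{}_0$ compact is never carried out, and it is not routine bookkeeping. Compactness of a block-diagonal difference requires the block norms to tend to zero; for your balanced blocks one computes $\snorm{P_i-Q_i}^2\le\tr\bigl((P_i-Q_i)^2\bigr)=2\tr(s^2\mid_{U_i})$, where $Q_i$ is the projection onto $U_i$, so you need the per-block trace defects $\tr(s^2\mid_{U_i})$ to vanish. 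But exhaustion forces later blocks to absorb the residual components of reference directions left over from earlier intermediate-value choices, and you have no control on the $s^2$-weight of those residuals; nothing in the proposal reconciles this tension. Resolving it is essentially equivalent to reproving the Kaftal--Weiss realization theorem, which is exactly the tool the paper cites to close the argument. In short: your reduction and index bookkeeping (the rank-one contributes $+1$, balanced blocks contribute $0$, additivity via \autoref{P: charact Fredh pairs}) are sound, but the proposal reduces the proposition to an unproved technical core rather than proving it; citing \cite[Propositions 6.4 and 6.6]{KW-2010-JFA}, as the paper does, would fill precisely that hole.
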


\begin{proof}
  By \autoref{P:in J} (i), $\projone{}-\projtwo{} \in \K$ implies that $\projone{} \wedge \projtwo{}^\perp$ and $\projone{}^\perp \wedge \projtwo{}$ are both finite.
  Thus $\projone{}_0-\projtwo{}_0 = (\projone{}-\projtwo{}) + (\projone{} \wedge \projtwo{}^\perp - \projone{}^\perp \wedge \projtwo{}) \in \K \setminus \mathcal{L}^2$.
  It suffices to prove the proposition for projections in generic position because then we simply set $\projone{}' := \projone{} \wedge \projtwo{} + \projone{} \wedge \projtwo{}^\perp + \projone{}_0'$ for the general case.
  So to simplify notation, assume henceforth that $\projone{},\projtwo{}$ are in generic position and have the form as in (\ref{e:p,q}).
  In particular, $\projtwo{}\sim \projtwo{}^\perp\sim 1$ and by \autoref {L:invert}, $[\projone{}:\projtwo{}]=0$.

  Next, choose a rank one projection $r'\le \projtwo{}^{\perp}$ and let $r:=\projtwo{}^{\perp} - r'$.
  After identifying $B(\h) \simeq B(\projtwo{}\h \oplus r\h \oplus \mathbb{C})$ with $M_2(\projtwo{}B(\h)\projtwo{}) \oplus \mathbb{C}$ via a partial isometry taking $\projtwo{}\h \to r\h$, consider the projection
  \begin{equation*}
    \tilde{\projone{}} :=
    \begin{pmatrix}
      c^2 & cs  & 0 \\
      cs  & s^2 & 0 \\
      0   & 0   & 1 \\
    \end{pmatrix}.
  \end{equation*}
  Note that $\tilde{\projone{}} - (0 \oplus 0 \oplus 1)$ and $\projtwo{}$ are in generic position relative to their join, and hence their essential codimension is zero.
  This implies that $[\tilde{\projone{}},\projtwo{}] = 1 \not= 0 = [\projone{},\projtwo{}]$.

  Now, choose an orthonormal basis $\{e_n\}$ that diagonalizes $\projtwo{}$, $E$ its corresponding conditional expectation, and let $\xi$ be the diagonal sequence of $s^2$.
  Then under natural notations we have
  \begin{equation*}
    E(\projone{})= E_\projtwo{}(\projone{})\oplus E_{\projtwo{}^\perp}(\projone{})
  \end{equation*}
  and furthermore, $E_\projtwo{}(\projone{})=E_\projtwo{}(\tilde{\projone{}})= E_\projtwo{}(c^2)$ and $E_{\projtwo{}^\perp}(\projone{})=E_{\projtwo{}}(s^2)= \diag \xi$.

  Since $\projone{}-\projtwo{} \in \K \setminus \mathcal{L}^2$, by \autoref{P:in J} we have that $s^2 \in \K \setminus \mathcal{L}^1$, that is $\xi\to 0$ but $\xi\not \in \ell^1$.
  By the Schur--Horn theorem for compact operators \cite[Proposition 6.4]{KW-2010-JFA} $\xi$ is majorized by the eigenvalues sequence $\lambda(s^2)$ of the operator $s^2$ and hence,
  \begin{equation*}
    \xi\prec \lambda(s^2)\prec \lambda
    \begin{pmatrix}
      s^2 & 0 \\
      0   & 1 \\
    \end{pmatrix}.
  \end{equation*}
  Now
  \(
  \begin{pmatrix}
    s^2 & 0 \\
    0   & 1 \\
  \end{pmatrix}
  \)
  is a positive compact operator with zero kernel belonging to $\B(\projtwo{}^\perp \h)$.
  Hence by \cite[Proposition 6.6]{KW-2010-JFA} there is a unitary $ u\in \B(\projtwo{}^\perp \h)$ such that
  \begin{equation*}
    \diag \xi= E_{\projtwo{}^\perp\h}\big (u
    \begin{pmatrix}
      s^2 & 0 \\
      0   & 1 \\
    \end{pmatrix}
    u^*\big).
  \end{equation*}
  Let $u':= 1\mid _{\projtwo{}\h}\oplus u$ and $\projone{}':= u'(\tilde \projone{})u'^*$.
  Then
  \begin{equation*}
    E(\projone{}')= E_\projtwo{}(c^2)\oplus \diag \xi = E(\projone{}).
  \end{equation*}
  Since $u'\projtwo{}u'^{*} = \projtwo{}$ we have
  \begin{equation*}
    [\projone{}':\projtwo{}] = [u\tilde{\projone{}}u^{*}:u\projtwo{}u^{*}] = [\tilde{\projone{}}:\projtwo{}] \not= [\projone{},\projtwo{}]. \qedhere
  \end{equation*}
\end{proof}

As a second application of \autoref{T:ess codim} and of the techniques used to prove it, we will consider a recent work by Bownik and Jasper \cite{BJ-2013-TAMS}.
Based on Kadison's characterization of diagonals of projections, Bownik and Jasper characterized the diagonals of selfadjoint operators with finite spectrum and in a key part of their analysis they too encountered an index obstruction similar to the one in \autoref{T:Kadison15}~(ii).
Following their notations, if $z\in \B(\h)$ is a selfadjoint operator with finite spectrum we let $\sigma(z)= \{a_j\}_{j=-m}^{n+r}$ and $\projone{}_j= \chi_{\{a_j\}}(z)$ be the spectral projection corresponding to the eigenvalue $a_j$, so that
\begin{equation*}
  z= \sum _{j=-m}^{n+r}a_j \projone{}_j.
\end{equation*}
For ease of notations perform if necessary a transformation so to have
\begin{equation*}
  \tr(\projone{}_j)< \infty \text{ for }j< 0 \text { and  } j>n+1, \quad a_0=0, \text{  and }  a_{n+1}=1.
\end{equation*}
Let $\{e_n\}$ be an orthonormal basis, $\{d_n\}$ be the diagonal of $z$ with respect to that basis and let as in
\autoref{T:Kadison15},
\begin{equation*}
  a = \sum_{d_n \le \nicefrac{1}{2}} d_n
  \qquad\text{and}\qquad
  b = \sum_{d_n > \nicefrac{1}{2}} (1-d_n),
\end{equation*}

Then their Theorem~4.1, which is a key component of the necessity part of their characterization, states that
\begin{theorem}[\protect{\cite[Theorem~4.1]{BJ-2013-TAMS}}]
  \label{T:BJ}
  If $a+b< \infty$ then
  \begin{enumerate}
  \item $\tr(\projone{}_j) < \infty$ for $0<j< n+1$;
  \item $a-b - \sum_{j\ne n+1} a_j\tr(\projone{}_j) \in \mathbb Z.$
  \end{enumerate}
\end{theorem}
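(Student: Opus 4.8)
The plan is to realise $a-b$ as a weighted sum of essential codimensions of the spectral projections of $z$ against the diagonal projection, and then to resum the weights. Throughout let $\projtwo{}$ be the projection onto $\overline{\spans}\{e_k\mid d_k>\nicefrac{1}{2}\}$, as in the proof of \autoref{T:ess codim}, and for $0\le j\le n+1$ write $R_j:=\chi_{[a_j,\infty)}(z)=\sum_{k\ge j}\projone{}_k$. I would first reduce to the case $0\le z\le 1$: since $\projone{}_j$ is finite rank for $j<0$ and $j>n+1$, collapsing those eigenvalues to $0$ and to $1$ produces an operator $\hat z$ with $0\le\hat z\le 1$ differing from $z$ by a finite-rank selfadjoint $F$. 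One checks that this changes the hypothesis $a+b<\infty$ and the quantity in (ii) only by an integer (see the last paragraph), so it suffices to treat $\hat z$; I thus assume $\sigma(z)=\{0=a_0<a_1<\dots<a_n<a_{n+1}=1\}$.

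The technical heart is the claim that $R_j-\projtwo{}\in\mathcal L^2$ for every $1\le j\le n+1$. On $\sigma(z)\subseteq[0,1]$ one has the operator inequalities $R_j\le a_j^{-1}z$ and $1-R_j\le(1-a_n)^{-1}(1-z)$, since the corresponding scalar inequalities hold pointwise on the spectrum. Compressing to $\projtwo{}^\perp$ and to $\projtwo{}$ respectively gives $\tr(\projtwo{}^\perp R_j\projtwo{}^\perp)\le a/a_1<\infty$ and $\tr(\projtwo{}(1-R_j)\projtwo{})\le b/(1-a_n)<\infty$; these are, up to sign, the two corners $\projtwo{}^\perp(R_j-\projtwo{})\projtwo{}^\perp$ and $\projtwo{}(R_j-\projtwo{})\projtwo{}$, so by \autoref{P:corners} with $\mathcal J=\mathcal L^2$ (the equivalence of (i) and (ii)) we get $R_j-\projtwo{}\in\mathcal L^2$. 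Taking successive differences, $\projone{}_j=R_j-R_{j+1}\in\mathcal L^2$ for $0<j<n+1$, and a Hilbert--Schmidt projection is finite rank; this is assertion (i).

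For (ii) I would use $z=\sum_{j=1}^{n+1}(a_j-a_{j-1})R_j$ together with the identity $a-b=\tr\big(\projtwo{}^\perp z\projtwo{}^\perp+\projtwo{}(z-1)\projtwo{}\big)$, i.e. $a-b$ is the trace of the compression of $z-\projtwo{}$ to the two diagonal blocks of $\projtwo{}$. By linearity, by $\sum_j(a_j-a_{j-1})=1$, and by the ``furthermore'' clause of \autoref{P:corners}, this yields $a-b=\sum_{j=1}^{n+1}(a_j-a_{j-1})I_j$ with $I_j:=[R_j:\projtwo{}]$. The decisive cancellation is $I_j-I_{j+1}=\tr(\projone{}_j)$, because the trace of the diagonal compression of the trace-class projection $\projone{}_j=R_j-R_{j+1}$ is just $\tr(\projone{}_j)$. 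An Abel summation then collapses the weighted sum to $a-b=I_{n+1}+\sum_{j=1}^{n}a_j\tr(\projone{}_j)=[\projone{}_{n+1}:\projtwo{}]+\sum_{j\ne n+1}a_j\tr(\projone{}_j)$ (using $a_0=0$), and $[\projone{}_{n+1}:\projtwo{}]\in\mathbb Z$ is precisely the asserted integer.

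The step I expect to be most delicate is the reduction, not the main computation. Passing from $z$ to $\hat z$ must change $a-b$ and the correction sum $\sum_{j\ne n+1}a_j\tr(\projone{}_j)$ by the \emph{same} integer, yet $F$ is not diagonal and can perturb infinitely many entries $d_k$. The saving facts are that $\sum_k\abs{\langle Fe_k,e_k\rangle}\le\norm{F}<\infty$ in trace norm, so $a$ and $b$ move by finite amounts, and that $a+b<\infty$ forces all but finitely many $d_k$ to lie outside $[\nicefrac{1}{4},\nicefrac{3}{4}]$, so only finitely many entries cross the threshold $\nicefrac{1}{2}$. Hence $a-b$ changes by $\tr F$ minus an integer; comparing this with the explicit value $\tr F=\tr(\hat z-z)$ and with the change in the correction sum, all contributions off $[0,1]$ cancel up to the integer $\tr\big(\sum_{j>n+1}\projone{}_j\big)$, which is what makes the reduction legitimate.
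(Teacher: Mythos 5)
Your proposal is correct, but it takes a genuinely different route from the paper's proof. The paper never modifies $z$, its diagonal, or $\projtwo{}$: it introduces the auxiliary positive contraction $x=\sum_{j=1}^{n+1}a_j\projone{}_j$ (so that $z-x$ is finite rank), uses positivity of $x$ and $1-x$ to convert $a+b<\infty$ into $\projtwo{}(x-\projtwo{})\projtwo{},\,\projtwo{}^\perp(x-\projtwo{})\projtwo{}^\perp\in\mathcal L^1$, and then invokes a lemma proved expressly for this purpose (\autoref{L: from q to x}, the extension of \autoref{P:corners} to positive contractions) to conclude both $x-\projtwo{}\in\mathcal L^2$ and $x\chi_{[0,a_n]}(x)=\sum_{j=1}^n a_j\projone{}_j\in\mathcal L^1$; this gives (i) at once, then $\projone{}_{n+1}-\projtwo{}\in\mathcal L^2$, and \autoref{P:corners} together with the finite-rank term $y=z-\projone{}_{n+1}$ yields (ii) along with the clean identification (\ref{e:BJint}) in terms of the original data. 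You avoid \autoref{L: from q to x} entirely and work only with pairs of projections: the operator inequalities $R_j\le a_j^{-1}z$ and $1-R_j\le(1-a_n)^{-1}(1-z)$ feed the projection form of \autoref{P:corners} to give $R_j-\projtwo{}\in\mathcal L^2$, whence (i) since a Hilbert--Schmidt projection is finite rank, and the identity $a-b=\sum_{j=1}^{n+1}(a_j-a_{j-1})[R_j:\projtwo{}]$ with $[R_j:\projtwo{}]-[R_{j+1}:\projtwo{}]=\tr(\projone{}_j)$ collapses by Abel summation---a genuinely attractive formula, exhibiting $a-b$ as a weighted average of essential codimensions. What your route costs is exactly what you flagged: the preliminary collapse of the outlying eigenvalues changes the diagonal and hence $\projtwo{}$, forcing the threshold-crossing analysis. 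That analysis is sound (the negative parts of the diagonal are summable because $z-\hat z$ is trace class, so all but finitely many $d_k$ avoid $[\nicefrac{1}{4},\nicefrac{3}{4}]$ and only finitely many cross $\nicefrac{1}{2}$), but in a full write-up the bookkeeping must be done with care about sign conventions for $F$, and note that the integer you ultimately exhibit is the essential codimension attached to $\hat z$ and its diagonal projection, shifted by the crossing count and by $\tr\bigl(\sum_{j>n+1}\projone{}_j\bigr)$; this suffices for the integrality asserted in (ii), but it does not directly recover the paper's sharper statement that the integer equals $[\projone{}_{n+1}:\projtwo{}]$ for the original $z$ and $\projtwo{}$.
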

Here of course we use the convention that $0\cdot \infty=0$ and hence $a_0\tr(\projone{}_0)=0$ whether $\tr(\projone{}_0)$ is finite or not.

We will present an independent proof of this result and at the same time identify the integer in (ii)  proving that if we set $\projtwo{}$ as in \autoref{T:Kadison15} to be the projection on $\overline{\spans}\{e_j\mid~ d_j> \nicefrac{1}{2}\}$, then
\begin{equation}
  \label{e:BJint}
  a-b - \sum_{j\ne n+1} a_j\tr(\projone{}_j)= [\projone{}_{n+1}:\projtwo{}].
\end{equation}

First we need an extension to positive elements of the equivalence of (i) and (ii) in \autoref{P:corners}.

\begin{lemma}
  \label{L: from q to x}
  Let $\mathcal J$ be a proper ideal, $x\in \B(\h)_+$ a positive contraction, and $\projtwo{}\in \B(\h)$ a projection.
  \begin{enumerate}
  \item If $\projtwo{}-\projtwo{}x\projtwo{}\in \mathcal J$ and $\projtwo{}^\perp x \projtwo{}^\perp\in \mathcal J$, then $x-\projtwo{}\in \mathcal J^{\nicefrac{1}{2}}$ and $ x\chi_{[0, \eps]}(x)\in \mathcal J$ for every $0< \eps<1$.
  \item Assume that $x$ is a projection or that $\mathcal J$ is idempotent (i.e., $\mathcal J= \mathcal J^2$).
    If $x-\projtwo{}\in \mathcal J^{\nicefrac{1}{2}}$ and $ x\chi_{[0, \eps]}(x)\in \mathcal J$ for some $0< \eps<1$, then $\projtwo{}-\projtwo{}x\projtwo{}\in \mathcal J$ and $\projtwo{}^\perp x \projtwo{}^\perp\in \mathcal J$.
  \end{enumerate}
\end{lemma}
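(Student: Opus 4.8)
The plan is to phrase everything in terms of $y:=x-\projtwo{}$ and the $2\times2$ decomposition relative to $\projtwo{}+\projtwo{}^\perp=1$. Since $\projtwo{}-\projtwo{}x\projtwo{}=-\projtwo{}y\projtwo{}$ and $\projtwo{}^\perp x\projtwo{}^\perp=\projtwo{}^\perp y\projtwo{}^\perp$, both the hypotheses and the conclusions concern only the diagonal corners of $y$. I will freely use that $\mathcal J$ and $\mathcal J^{\nicefrac{1}{2}}$ are selfadjoint, solid (if $0\le S\le T\in\mathcal J$ then $S\in\mathcal J$), and satisfy $\mathcal J\subseteq\mathcal J^{\nicefrac{1}{2}}$, together with the fact that for $b\ge0$ one has $b\in\mathcal J^{\nicefrac{1}{2}}$ iff $b^2\in\mathcal J$. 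The computation driving both parts is the identity
\begin{equation*}
  x-x^2=\big(\projtwo{}^\perp y\projtwo{}^\perp-\projtwo{}y\projtwo{}\big)-y^2,
\end{equation*}
which I would verify by substituting $x=\projtwo{}+y$ and using $\projtwo{}^2=\projtwo{}$.

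For (i) I would first prove $y\in\mathcal J^{\nicefrac{1}{2}}$. The diagonal corners $\projtwo{}y\projtwo{}$ and $\projtwo{}^\perp y\projtwo{}^\perp$ lie in $\mathcal J\subseteq\mathcal J^{\nicefrac{1}{2}}$ by hypothesis, so only the off-diagonal corner $B:=\projtwo{}x\projtwo{}^\perp$ is at issue. Here the positivity of $x$ enters: writing $x$ as a positive $2\times2$ matrix with diagonal entries the positive contraction $A:=\projtwo{}x\projtwo{}$ and $D:=\projtwo{}^\perp x\projtwo{}^\perp$, the standard estimate for positive operator matrices gives $B^{*}B\le\|A\|D\le D$, so by operator monotonicity of the square root $\sabs{B}\le D^{\nicefrac{1}{2}}\in\mathcal J^{\nicefrac{1}{2}}$, and solidity yields $B,B^{*}\in\mathcal J^{\nicefrac{1}{2}}$. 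All four corners of $y$ thus lie in $\mathcal J^{\nicefrac{1}{2}}$, whence $y\in\mathcal J^{\nicefrac{1}{2}}$ and in particular $y^2\in\mathcal J$. The identity above, combined with the corner hypotheses, then gives $x-x^2\in\mathcal J$. Finally, the scalar inequality $t\chi_{[0,\eps]}(t)\le\frac{1}{1-\eps}t(1-t)$ on $[0,1]$ upgrades through the functional calculus to $0\le x\chi_{[0,\eps]}(x)\le\frac{1}{1-\eps}(x-x^2)$, and solidity completes (i).

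For (ii) the identity shows that, given $y^2\in\mathcal J$ (which holds since $y\in\mathcal J^{\nicefrac{1}{2}}$), the two conclusions $\projtwo{}y\projtwo{}\in\mathcal J$ and $\projtwo{}^\perp y\projtwo{}^\perp\in\mathcal J$ are together equivalent to the single statement $x-x^2\in\mathcal J$: the operator $\projtwo{}^\perp y\projtwo{}^\perp-\projtwo{}y\projtwo{}$ is supported on orthogonal corners, so it belongs to $\mathcal J$ iff each corner does. It remains to derive $x-x^2\in\mathcal J$ from the case hypothesis. If $x$ is a projection this is immediate, as $x-x^2=0$. If instead $\mathcal J=\mathcal J^2$, I would first record the auxiliary identity $\mathcal J^{\nicefrac{1}{2}}=\mathcal J$: in the solid-sequence (Calkin) description of ideals the characteristic set $\Sigma_{\mathcal J^2}$ consists of sequences dominated by products $\alpha\beta$ with $\alpha,\beta\in\Sigma_{\mathcal J}$, so if $\xi^2\le\alpha\beta$ then $\xi\le\sqrt{\alpha\beta}\le\tfrac12(\alpha+\beta)\in\Sigma_{\mathcal J}$; thus $\xi^2\in\Sigma_{\mathcal J}$ forces $\xi\in\Sigma_{\mathcal J}$, i.e.\ $\mathcal J^{\nicefrac{1}{2}}=\mathcal J$. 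Consequently $y\in\mathcal J^{\nicefrac{1}{2}}=\mathcal J$, and its corners lie in $\mathcal J$ directly.

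I expect the two substantive points to be the off-diagonal positivity estimate in (i) and the equality $\mathcal J^{\nicefrac{1}{2}}=\mathcal J$ for idempotent ideals in (ii); the remainder is bookkeeping with the corner decomposition. I would also flag one structural subtlety: once the case hypothesis of (ii) is in force, the spectral condition $x\chi_{[0,\eps]}(x)\in\mathcal J$ is not separately consumed (for a projection it is automatically $0$, and for idempotent $\mathcal J$ the stronger membership $y\in\mathcal J$ already delivers the conclusion). It is retained so that (i) and (ii) read as a genuine equivalence, reflecting that $x\chi_{[0,\eps]}(x)\in\mathcal J$ is precisely the condition pinning down the bottom of the spectrum of $x$, the feature distinguishing a general positive contraction from a projection.
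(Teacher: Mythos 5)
Your proof is correct, and while your derivation of $x-\projtwo{}\in\mathcal J^{\nicefrac{1}{2}}$ runs parallel to the paper's, the rest is organized differently, around the single identity $x-x^2=(\projtwo{}^\perp y\projtwo{}^\perp-\projtwo{}y\projtwo{})-y^2$ with $y=x-\projtwo{}$, which the paper never writes down. In part (i), for the off-diagonal corner the paper uses the bare operator inequalities $\projtwo{}^\perp x\projtwo{}x\projtwo{}^\perp\le\projtwo{}^\perp x^2\projtwo{}^\perp\le\projtwo{}^\perp x\projtwo{}^\perp\in\mathcal J$ (from $x\projtwo{}x\le x^2\le x$), whereas you invoke the factorization lemma for positive $2\times2$ operator matrices to get $B^{*}B\le\|A\|D\le D$; both yield the same estimate, yours by a standard but heavier structural tool. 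For the spectral cutoff, the paper bounds the corners of $x_\eps:=x\chi_{[0,\eps]}(x)$ separately, via $\projtwo{}^\perp x_\eps\projtwo{}^\perp\le\projtwo{}^\perp x\projtwo{}^\perp$ and $(1-\eps)\projtwo{}x_\eps\projtwo{}\le\projtwo{}-\projtwo{}x\projtwo{}$, and then applies the pinching inequality $x_\eps\le2\big(\projtwo{}x_\eps\projtwo{}+\projtwo{}^\perp x_\eps\projtwo{}^\perp\big)$, while you first show $x-x^2\in\mathcal J$ and then use the scalar bound $t\chi_{[0,\eps]}(t)\le\frac{1}{1-\eps}t(1-t)$; your route isolates $x-x^2$ as the governing object and avoids the pinching step. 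In part (ii), your projection case ($x-x^2=0$, then read off the two orthogonal corners) is exactly the paper's appeal to its identity (\ref{e:decomp}) in disguise, and in the idempotent case you actually prove the equality $\mathcal J^{\nicefrac{1}{2}}=\mathcal J$ via characteristic sets and the AM--GM inequality, a fact the paper asserts in passing without proof. Your closing observation is moreover sharper than the paper's own argument: once $y\in\mathcal J^{\nicefrac{1}{2}}=\mathcal J$, both corners of $y$ lie in $\mathcal J$ outright, so the hypothesis $x\chi_{[0,\eps]}(x)\in\mathcal J$ is never consumed in (ii); the paper instead expends it on the $\projtwo{}^\perp x\projtwo{}^\perp$ corner through the estimate $\projtwo{}^\perp(x-x_\eps)\projtwo{}^\perp\le\frac{1}{\eps}\projtwo{}^\perp(x-x_\eps)^2\projtwo{}^\perp\le\frac{1}{\eps}\projtwo{}^\perp x^2\projtwo{}^\perp$, a detour your argument shows to be dispensable.
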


\begin{proof}
(i).
Since $\projtwo{}^\perp x \projtwo{} x \projtwo{}^\perp\le \projtwo{}^\perp x^2  \projtwo{}^\perp\le \projtwo{}^\perp x \projtwo{}^\perp\in \mathcal J$, it follows that $\projtwo{} x \projtwo{}^\perp$ and $\projtwo{}^\perp x \projtwo{}$ belong to $\mathcal J^{\nicefrac{1}{2}}$.
But then
\begin{equation*}
  x-\projtwo{} =  (\projtwo{}x\projtwo{}- \projtwo{}) +  \projtwo{}^\perp x \projtwo{}^\perp + \projtwo{} x \projtwo{}^\perp+ \projtwo{}^\perp x \projtwo{}\in \mathcal J^{\nicefrac{1}{2}}.
\end{equation*}
Let $\eps>0$ and let $x_\eps:= x\chi_{[0, \eps]}(x)$.
Then
$0\le \projtwo{}^\perp x_\eps \projtwo{}^\perp\le \projtwo{}^\perp x \projtwo{}^\perp\in \mathcal J$, whence $\projtwo{}^\perp x_\eps \projtwo{}^\perp\in \mathcal J$.
Furthermore,
\begin{equation*}
  1-x\ge (1-x) \chi_{[0, \eps]}(x)\ge (1-\eps)  \chi_{[0, \eps]}(x)\ge (1-\eps)x_\eps
\end{equation*}
and hence
$\projtwo{}-\projtwo{}x\projtwo{}\ge (1-\eps)\projtwo{}x_\eps \projtwo{}.$
Thus $\projtwo{}x_\eps \projtwo{}\in \mathcal J$ and since
\begin{equation*}
  0\le x_\eps \le 2\big(\projtwo{}x_\eps \projtwo{} + \projtwo{}^\perp x_\eps \projtwo{}^\perp)\in \mathcal J
\end{equation*}
it follows that $x_\eps\in \mathcal J$.

(ii).
The case when $x$ is a projection is given by (\ref{e:decomp}).
Assume then that $\mathcal J$ is idempotent.
Then $x-\projtwo{}\in \mathcal J^{\nicefrac{1}{2}}= \mathcal J$ implies that $\projtwo{}-\projtwo{}x\projtwo{}=-\projtwo{}(x-\projtwo{})\projtwo{}\in \mathcal J$.
Furthermore,
$ (x-\projtwo{})^2= x^2-x\projtwo{}-\projtwo{}x+\projtwo{}$ hence
$\projtwo{}^\perp x^2 \projtwo{}^\perp= \projtwo{}^\perp (x-\projtwo{})^2 \projtwo{}^\perp\in \mathcal J$.
Then
\begin{equation*}
  \projtwo{}^\perp (x-x_\eps) \projtwo{}^\perp\le \frac{1}{\eps}\projtwo{}^\perp (x-x_\eps)^2 \projtwo{}^\perp\le \frac{1}{\eps}\projtwo{}^\perp x^2 \projtwo{}^\perp \in \mathcal J
\end{equation*}
and hence
\begin{equation*}
  \projtwo{}^\perp x \projtwo{}^\perp = \projtwo{}^\perp (x-x_\eps) \projtwo{}^\perp + \projtwo{}^\perp x_\eps \projtwo{}^\perp \in \mathcal J.
\end{equation*}
\end{proof}

Notice that if $\mathcal J$ is not idempotent and $k\in \mathcal J^{\nicefrac{1}{2}}_+\setminus \mathcal J$ is a positive contraction, then $x:=1-k$ and $\projtwo{}:=1$ satisfy both hypotheses of \autoref{L: from q to x} (ii)  but $k=\projtwo{}-\projtwo{}x\projtwo{}\not \in \mathcal J$.

Now we can proceed with the proof of \autoref{T:BJ}  and (\ref{e:BJint}).
\begin{proof}
  Set $x = \sum_{j=1}^{n+1} a_j \projone{}_j$.
  Then $0 \le x \le 1$ and
  \begin{equation*}
    z-x = \sum_{j=-m}^{-1}a_j \projone{}_j+\sum_{j=n+2}^{n+r}a_j \projone{}_j \quad\text{has finite rank.}
  \end{equation*}

  As in (\ref{e:a-b}) we have that
  \begin{equation*}
    a= \tr(\projtwo{}^{\perp} x \projtwo{}^{\perp}) + \tr(\projtwo{}^\perp (z-x) \projtwo{}^\perp) = \tr(\projtwo{}^\perp z \projtwo{}^\perp)
  \end{equation*}
  and
  \begin{equation*}
     b= \tr(\projtwo{}-\projtwo{}x\projtwo{}) - \tr(\projtwo{}(z-x)\projtwo{}) = \tr( \projtwo{}- \projtwo{}z\projtwo{}),
  \end{equation*}
  hence $\projtwo{} (z-\projtwo{}) \projtwo{} \in \mathcal L^1$ , $ \projtwo{}^\perp(z-\projtwo{})\projtwo{}^\perp \in \mathcal L^1$, and
  \begin{equation}
    \label{e:a-bBJ}
    a-b =  \tr\big( \projtwo{}(z-\projtwo{})\projtwo{}+ \projtwo{}^\perp (z-\projtwo{})\projtwo{}^\perp \big).
  \end{equation}

  We also have $\projtwo{}(x-\projtwo{})\projtwo{}\in \mathcal L^1$ and $\projtwo{}^\perp (x-\projtwo{}) \projtwo{}^\perp \in \mathcal L^1$, hence by \autoref{L: from q to x}, it follows that $x-\projtwo{}\in \mathcal L^2$ and
  \begin{equation*}
    \sum_{j=1}^n a_j \projone{}_j= x\chi_{[0, a_n]}(x) \in \mathcal L^1.
  \end{equation*}
  But then $x-\projone{}_{n+1}= \sum_{j=1}^n a_j \projone{}_j$ has finite rank and in particular, $\tr(\projone{}_j)< \infty$ for $0< j < n+1$, thus proving (i).
  As a consequence,
  \begin{equation*}
    \projone{}_{n+1}-\projtwo{}= \projone{}_{n+1}-x+ x-\projtwo{}\in \mathcal L^2
  \end{equation*}
  and hence by \autoref{P:corners},
  \begin{equation}
    \label{e:10}
    [\projone{}_{n+1}:\projtwo{}]= \tr\big(\projtwo{}(\projone{}_{n+1}-\projtwo{})\projtwo{}+ \projtwo{}^\perp(\projone{}_{n+1}-\projtwo{})\projtwo{}^\perp\big).
  \end{equation}
  Furthermore, $y:= z-\projone{}_{n+1}= \sum_{j\ne n+1} a_j \projone{}_j $ has finite rank and in particular is in $\mathcal L^1$, so that
  \begin{equation}
    \label{e:11}
    \sum_{j\ne n+1} a_j\tr(\projone{}_j)= \tr(y)= \tr(\projtwo{}y\projtwo{}+\projtwo{}^\perp y \projtwo{}^\perp).
  \end{equation}
  Finally from (\ref{e:a-bBJ}) and (\ref{e:10}),
  \begin{align*}
    a-b & =  \tr\big( \projtwo{}(\projone{}_{n+1}-\projtwo{})\projtwo{}+ \projtwo{}^\perp(\projone{}_{n+1}-\projtwo{})\projtwo{}^\perp +\projtwo{}y\projtwo{}+ \projtwo{}^\perp y \projtwo{}^\perp \big) \\
        & =[\projone{}_{n+1}:\projtwo{}]+ \tr(y).
  \end{align*}
  Thus by (\ref{e:11}), $a-b-  \sum_{j\ne n+1} a_j\tr(\projone{}_j)= [\projone{}_{n+1}:\projtwo{}]\in \mathbb Z$.
\end{proof}

\bibliographystyle{amsalpha}
\bibliography{references.bib}

\end{document}